\newcommand {\e}{\epsilon}
\newcommand {\N}{\mathbb{N}}
\newcommand {\F}{\mathbb{F}}
\newcommand {\G}{\mathrm{G}}
\newtheorem{theorem}{Theorem}[section]
\newtheorem{lema}[theorem]{Lemma}
\newtheorem{corolario}[theorem]{Corollary}
\newtheorem{definition}[theorem]{Definition}
\newtheorem{proposition}[theorem]{Proposition}
\newtheorem{example}[theorem]{Example}
\newtheorem{obs}[theorem]{Remark}
\begin{document}

\title{Free path groupoid grading on Leavitt path algebras}

\maketitle
\begin{center}
{\large Daniel Gonçalves\footnote{This author is partially supported by CNPq.} and Gabriela Yoneda}\\
\end{center}  
\vspace{8mm}

\abstract 
In this work we realize Leavitt path algebras as partial skew groupoid rings. This yields a free path groupoid grading on Leavitt path algebras. Using this grading we characterize free path groupoid graded isomorphisms of Leavitt path algebras that preserves generators. 
\doublespace

\vspace{2.0pc}

{\bf Keywords:} Leavitt path algebras, partial skew groupoid rings, free path groupoid, isomorphism of Leavitt path algebras.

\vspace{1.0pc}

{\bf MSC 2010:} 16W50, 16W55, 16G99

\section{Introduction}

Leavitt path algebras, introduced in \cite{arandapino, arandapino1} as generalizations of Cuntz-Krieger algebras, have been the focus of intense research in recent years. Part of the interest in these algebras come from the fact that the Leavitt path algebra associated to a graph encodes much of the combinatorics of the graph and, therefore, their algebraic properties are often linked to combinatorial properties of the underlying graph. To mention a few results in the field, in \cite{Tomforde, TR, vivi} the ideal structure of Leavitt path algebras is studied, in \cite{Chen, GR, uniteq, sepgr, Hranga, ranga} the theory of representations is approached, in \cite{Hazrat} the graded structure of Leavitt path algebras is explored and so on. Of particular interest to our work, in \cite{GR1}, Leavitt path algebras are realized as partial skew group rings (a notion introduced in \cite{Ex,Ex1}) and in \cite{GOR} this realization is used to give new proves of the simplicity criteria of these algebras.

One direction of research in the field regards the study of isomorphisms of Leavitt path algebras and its connections with isomorphisms of their counter-part in C*-algebras, namely graph C*-algebras. This is a topic included in the graph algebra open problem webpage kept by Mark Tomforde and that was initially proposed by Abrams and Tomforde in \cite{AT11}. Recent developments on the subject can be found in \cite{RT}. Also in a very recent development, graph C*-algebras are show to be closely related to orbit equivalence of graphs, see \cite{BCW}. Given the above, the study of isomorphisms of Leavitt path algebras gains extra importance. It is our goal in this paper to give the reader a new insight into Leavitt path algebras and show how this insight can be applied to characterize a class of isomorphisms between Leavitt path algebras. 

A key asset in our work is the theory of partial skew groupoid rings. These rings were defined in \cite{BFP, BP} as generalizations of partial skew group rings, wich in turn were defined in \cite{Ex,Ex1}. As we mentioned before, in \cite{GR1} Leavitt path algebras were realized as partial skew group rings. Building from the ideas in \cite{GR1} we realize Leavitt path algebras as partial skew groupoid rings. While in the group case the group acting is the free group in the edges, in our case the free path groupoid will act. Among the differences between the groupoid approach and the one in \cite{GR1} we mention that, in the partial groupoid skew ring case, each local unit of a Leavitt path algebra is represented on its own fiber and the definition of the appropriate partial action happens in a more natural ideal.

With our description of Leavitt path algebras as partial skew groupoid rings we can obtain a characterization of (free path groupoid) graded isomorphisms of Leavitt path algebras that preserves generators. More precisely, given two graphs $E_1$ and $E_2$ we show that 
if there exists a homomorphism between the associated free path groupoids, say $G_1$ and $G_2$, that preserves the set of finite words and is injective in this set, then there exists a (free path groupoid) graded isomorphism between the Leavitt path algebras (which preserves the generators). Furthermore, in the presence of condition (L) for $E_1$, we show the converse of our statement, that is, we show that if there exists a free path groupoid graded isomorphism, that preserves generators, from $L_K(E_1)$ to $L_K(E_2)$, then there exists an homomorphism between the associated free path groupoids that preserves the set of finite words and is injective in this set. 

We organize our work as follows: In section two we include background material, in order to make the paper as self contained as possible. The characterization of Leavitt path algebras as partial skew groupoid rings is done in section 3 and in section 4 we show the applications mentioned in the previous paragraph.

\section{Background}

\subsection{Partial skew groupoid rings}

The notion of partial skew groupoid rings was introduced in \cite{BFP, BP}, derived from the work by Dokuchaev and Exel in \cite{Ex}. Below we recall the notions leading to partial skew groupoid rings.

\begin{definition} A partial action of a group $\G$ on a set $\Omega$ is a pair $\alpha= (\{D_{t}\}_{t\in \G}, \ \{\alpha_{t}\}_{t\in \G})$, where for each $t\in \G$, $D_{t}$ is a subset of $\Omega$ and $\alpha_{t}:D_{t^{-1}} \rightarrow \Delta_{t}$ is a bijection such that $D_{e} = \Omega$, $\alpha_{e}$ is the identity in $\Omega$, $\alpha_{t}(D_{t^{-1}} \cap D_{s})=D_{t} \cap D_{ts}$ and $\alpha_{t}(\alpha_{s}(x))=\alpha_{ts}(x),$ for all $x \in D_{s^{-1}} \cap D_{s^{-1} t^{-1}}.$ In case $\Omega$ is an algebra or a ring then the subsets $D_t$ should also be ideals and the maps $\alpha_t$ should be isomorphisms. 
\end{definition}

Associated to a partial action of a group $G$ in a ring $A$, the partial skew group ring $A\rtimes_{\alpha} \G$ is defined as the set of all finite formal sums $\sum_{t \in G} a_t\delta_t$, where, for all $t \in G$, $a_t \in D_t$ and $\delta_t$ are symbols. Addition is defined in the usual way and multiplication is determined by $(a_t\delta_t)(b_s\delta_s) = \alpha_t(\alpha_{-t}(a_t)b_s)\delta_{t+s}$.

\begin{definition}[as in \cite{BFP}]
A groupoid is a set $G$ equipped with a binary partially defined operation such that:
\begin{itemize}
\item[(i)] for all $ g, h, l \in G$, $g(hl) \text{ exists if, and only if, } (gh)l$ exists. In this case, $g(hl) = (gh)l$; 
\item[(ii)] for every $ g, h , l \in G$, $g(hl)$ exists if, and only if, $gh \text{ and } hl$ exist;
\item[(iii)] for all $ g \in G$, there exist (unique) elements $d(g), \epsilon(g) \in G$ such that $gd(g) \text{ and } \epsilon(g)g$ exist and $gd(g) = g = \epsilon(g)g$;
\item[(iv)] for each $g \in G$ there exists $g^{-1} \in G$ such that $d(g)= g^{-1}g \text{ and } \epsilon(g) = gg^{-1}$.
\end{itemize}
\end{definition}



\begin{obs}
Let $G$ be a groupoid. The set of admissible pairs is defined as $G^2 = \{ (g,h) \in G \times G : d(g) = \epsilon(h) \}$. An element $e \in G$ is an identity in $G$ if $e = d(g) = \epsilon(g^{-1})$, for some $g \in G$. We denote by $G_0$ the set of all identities in $G$. Notice that $G_0 = \{ g^{-1}g :  g \in G\}$.
\end{obs}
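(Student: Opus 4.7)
The plan is to prove the two set inclusions $G_0 \subseteq \{g^{-1}g : g \in G\}$ and $\{g^{-1}g : g \in G\} \subseteq G_0$ directly from the groupoid axioms (i)--(iv).

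For the forward inclusion, I would let $e \in G_0$, so by the stated definition of an identity there exists $g \in G$ with $e = d(g) = \epsilon(g^{-1})$. Axiom (iv) immediately identifies $d(g)$ with $g^{-1}g$, whence $e = g^{-1}g$ lies in the target set. This direction is purely a matter of unfolding the definitions.

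The reverse inclusion is where the real content lies. Given $g \in G$, I need to exhibit some $h \in G$ witnessing $g^{-1}g \in G_0$, i.e.\ with $g^{-1}g = d(h) = \epsilon(h^{-1})$. The natural candidate is $h = g$: axiom (iv) already gives $d(g) = g^{-1}g$, so what remains is the equality $\epsilon(g^{-1}) = g^{-1}g$. Applying (iv) now to $g^{-1}$ (in place of $g$) yields $\epsilon(g^{-1}) = g^{-1}(g^{-1})^{-1}$, so the claim reduces to the involutivity $(g^{-1})^{-1} = g$ of the inversion map.

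The main (and essentially only nontrivial) step of the plan is therefore to derive this involutivity from the axioms. My approach is to first prove uniqueness of inverses: if $h \in G$ satisfies $hg = d(g)$ and $gh = \epsilon(g)$, then $h = g^{-1}$, via a standard groupoid manipulation of the form $h = h\,\epsilon(h) = h(gg^{-1}) = (hg)g^{-1} = d(g)g^{-1} = g^{-1}g\,g^{-1} = g^{-1}$, using associativity (i) together with (iii) and (iv). Applying this uniqueness with the roles of $g$ and $g^{-1}$ interchanged then gives $(g^{-1})^{-1} = g$, which closes the reverse inclusion. I do not anticipate a conceptual obstacle here; the only care required is to check at each step that the relevant products are admissible in the sense of axiom (ii) and of the set $G^2$, i.e.\ that the source and range maps match up where needed.
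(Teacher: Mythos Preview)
Your argument is correct, and in fact more detailed than what the paper provides: this statement is a \emph{Remark} in the paper and carries no proof at all --- the equality $G_0 = \{g^{-1}g : g \in G\}$ is simply asserted as an observation following the definitions. Your derivation of the reverse inclusion via uniqueness of inverses and the involutivity $(g^{-1})^{-1} = g$ is the standard way to justify this and is sound, so you have supplied a complete verification where the paper leaves it to the reader.
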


Next, following \cite{BFP,BP} we define partial groupoid actions and their associated partial skew groupoid rings. 

\begin{definition} A partial action $\alpha$ of a groupoid $G$ on a set $X$ is a pair $\alpha =(\{ X_g\}_{g \in G}, \{\alpha_g \}_{g \in G})$ such that:
\begin{itemize}
\item[(I)] for all $g \in G$, $X_{\epsilon(g)}$ is a subset of $X$ and $X_g$ is a subset of $X_{\epsilon(g)}$;
\item[(II)] $\alpha_g : X_{g^{-1}} \rightarrow X_g$ are bijections that satisfies:
\begin{itemize}
\item[(i)] $\alpha_e$ is the identity $\operatorname{Id_{X_e}}$ of $X_e$, for all $e \in G_0$;
\item[(ii)] $\alpha_{h}^{-1}(X_{g^{-1}}\cap X_h)\subseteq X_{(gh)^{-1}}$, whenever $(g,h) \in G^2$;
\item[(iii)] $\alpha_g(\alpha_h(x)) = \alpha_{gh}(x)$, for all $x \in \alpha_h^{-1}(X_{g^{-1}}\cap X_h)$ e $(g,h) \in G^2$.
\end{itemize}
\end{itemize}

For partial actions on a ring $R$ we further ask that each $D_{\epsilon(g)}$ be an ideal of $R$, each $D_g$ be an ideal of $D_{\epsilon(g)}$ and the maps $\alpha_g : D_{g^{-1}} \rightarrow D_g$ be isomorphisms.
\end{definition}


\begin{definition} Let $\alpha\ = (\{D_g\}_{g \in G}, \{ \alpha_g\}_{g \in G})$ be a partial action of the groupoid $G$ on a ring $R$. The partial skew groupoid ring $R \rtimes_{\alpha}G$ is the set of all formal sums of the form $\displaystyle\sum_{g \in G} a_g \delta_g$, where $a_g \in D_g$, with addition defined in the usual way and multiplication given by
\begin{equation*}
a_g\delta_g \cdot b_h \delta_h =
\begin{cases}
 \alpha_g(\alpha_{g^{-1}}(a_g)b_h)\delta_{gh}& \text{, if } (g,h) \in G^2 ;\\
0 & \text{, otherwise. } 
\end{cases}
\end{equation*}
\end{definition}


\subsection{Leavitt path algebras as partial skew group rings}

A directed graph $E=(E^0, E^1, r, s)$ consists of a set $E^0$ of \emph{vertices}, a set $E^1$ of \emph{edges}, a \emph{range map} $r : E^1 \to E^0$ and a \emph{source map} $s: E^1 \to E^0$ which may be used to read off the direction of an edge.
Given a field $K$ and a directed graph $E$, the so called \emph{Leavitt path algebra} associated with $E$ (see e.g. \cite{arandapino,arandapino1})
is denoted by $L_K(E)$.
To be more precise,
$L_K(E)$ is the universal $K$-algebra generated by a set $\{v, e, e^* : v \in E^0, e\in E^1\}$
of elements satisfying the following five assertions:
\begin{enumerate}[{\rm (i)}]
	\item for all $v,w\in E^0$, $v^2=v$, and $vw=0$ if $v\neq w$;
	\item $s(e)e=er(e)=e$ for all $e \in E^1$;
	\item $r(e)e^*=e^*s(e)=e^*$ for all $e \in E^1$;
	\item for all $e,f \in E^1$, $e^*e=r(e)$, and $e^* f = 0$ if $e\neq f$;
	\item $v=\sum_{\{e\in E^1 \, : \,s(e)=v\}} ee^*$ for each vertex $v\in E^0$ which satisfies
	$0 < \#\{e \in E^1 : s(e) = v\} < \infty$.
\end{enumerate}

In \cite{GR1}, it was showed that each
Leavitt path algebra can be realized as a partial skew group ring.
We shall review this construction by first defining a partial action at the level of sets.

Let $E=(E^0,E^1,r,s)$ be a directed graph.
A \emph{path of length $n$} in $E$ is a sequence $\xi_1 \xi_2 \ldots \xi_n$ of edges in $E$ such that $r(\xi_i)=s(\xi_{i+1})$ for $i \in \{1,2,\ldots,n-1\}$.
If $\xi$ is a path of length $n$, then we write $|\xi|=n$.
The set of all finite paths in $E$ is denoted by $W$.
An \emph{infinite path} in $E$ is an infinite sequence $\xi_1 \xi_2 \ldots$ of edges in $E$ such that $r(\xi_i)=s(\xi_{i+1})$ for $i \in \N$.
The set of all infinite paths in $E$ is denoted by $W^\infty$.
Note that $W$ (respectively $W^\infty$) is a subset of the set of all finite (respectively infinite) words in the alphabet $E^1$.
As usual, the range and source maps can be extended from $E^1$ to $W \cup W^\infty \cup E^0$
by defining $s(\xi):=s(\xi_1)$ for $\xi=\xi_1\xi_2\ldots \in W^\infty$ or $\xi=\xi_1\ldots\xi_n \in W$,
$r(\xi):=r(\xi_n)$ for $\xi=\xi_1\ldots\xi_n \in W$ and $r(v)=s(v)=v$ for $v\in E^0$.
A finite path $\eta$ is said to be an \emph{initial subpath} of a (possibly infinite) path $\xi$, if there is a path $\xi'$ such that $r(\eta)=s(\xi')$ and $\xi=\eta\xi'$ hold.

The partial action that we are about to define, takes place on the set
\begin{displaymath}
X=\{\xi\in W:r(\xi) \text{ is a sink }\}\cup \{v\in E^0: v\text{ is a sink }\}\cup W^{\infty}	
\end{displaymath}
which is acted upon by $\F$, the free group generated by the set $E^1$.
(notice that, since $\F$ is generated by $E^1$, some elements of $\F$ can be thought of
as coming directly from $W$).

In order to have a partial action of $\F$ on $X$, for each $c\in \F$ we need to define a set $X_c$ and a map $\theta_c : X_{c^{-1}} \to X_c$,
such that they comply with the definition of a partial action. This is done as follows:

\begin{itemize}
\item $X_0:=X$, where $0$ is the neutral element of $\F$. 

\item $X_{b^{-1}}:=\{\xi\in X: s(\xi)=r(b)\},$ for all $b\in W$. 

\item $X_a:=\{\xi\in X: \xi_1\xi_2...\xi_{|a|}=a\},$ for all $a\in W$.

\item $X_{ab^{-1}}:=\{\xi\in X: \xi_1\xi_2...\xi_{|a|}=a\}=X_a,$ for $ab^{-1}\in \F$ with $a,b\in W$, $r(a)=r(b)$ and $ab^{-1}$ in its reduced form.

\item $X_c:=\emptyset$, for all other $c \in \F$.

\end{itemize}

The second step towards the construction of our partial action, is to define the maps $\theta_c : X_{c^{-1}} \to X_{c}$, for $c\in \F$.

Let  $\theta_{0}:X_{0}\rightarrow X_{0}$ be the identity map. For $b\in W$,  $\theta_b:X_{b^{-1}}\rightarrow X_b$ is defined by $\theta_b(\xi)=b\xi$.
and $\theta_{b^{-1}}:X_b\rightarrow X_{b^{-1}}$, for $\eta \in X_{b^{-1}}$, is defined by 
$\theta_{b^{-1}}(\eta)= \eta_{|b|+1}\eta_{|b|+2}...$, if $r(b)$ is not a sink and $\theta_{b^{-1}}(b)=r(b)$, if $r(b)$ is a sink.

Finally, for $a,b\in W$ with $r(a)=r(b)$ and $ab^{-1}$ in reduced form, $\theta_{ab^{-1}}:X_{ba^{-1}}\rightarrow X_{ab^{-1}}$ is defined by $\theta_{ab^{-1}}(\xi)=a\xi_{(|b|+1)}\xi_{(|b|+2)}\ldots$ for $\xi \in X_{ba^{-1}}$.
It is not difficult to see that the inverse of this map is given by 
$\theta_{ba^{-1}}:X_{ab^{-1}}\rightarrow X_{ba^{-1}}$ which is defined by $\theta_{ba^{-1}} (\eta)=b\eta_{(|a|+1)}\eta_{(|a|+2)}...$ for $\eta \in X_{ab^{-1}}$.

Notice that $\{\{X_c\}_{c\in \F}, \{\theta_c\}_{c\in\F}\}$ is a partial action on the level of sets and so it induces a partial action $\{\{F(X_c)\}_{c\in \F}, \{\alpha_c\}_{c\in\F}\}$, where, for each $c\in \F$, $F(X_c)$ denotes the algebra of all functions from $X_c$ to $K$, and $\alpha_c:F(X_{c^{-1}})\rightarrow F(X_c)$ is defined by $\alpha_c(f)=f\circ \theta_{c^{-1}}$. The partial skew group ring associated to this partial action is not $L_K(E)$ yet. For this one proceeds in the following way:

For each $c\in \F$, and for each $v\in E^0$, define the characteristic maps $1_c:=\chi_{X_c}$ and $1_v:=\chi_{X_v}$, where $X_v=\{\xi\in X:s(\xi)=v\}$. Notice that $1_c$ is the unit of $F(X_c)$. Finally, let
$$D_0=\text{span}\{\{1_p:p\in \F\setminus\{0\}\}\cup\{1_v:v\in E^0\}\},$$  (where $\text{span}$ means the $K$-linear span) and, for each $p\in \F\setminus\{0\}$, let $D_p\subseteq F(X_p)$ be defined as $1_p D_0$, that is, $$D_p=\text{span}\{1_p1_q:q\in \F\}.$$
Since $\alpha_p(1_{p^{-1}}1_q)=1_p1_{pq}$ (see \cite{GR1}), consider, for each $p\in \F$, the restriction of $\alpha_p$ to $D_{p^{-1}}$. Notice that $\alpha_{p}:D_{p^{-1}}\rightarrow D_p$ is an isomorphism of $K$-algebras and, furthermore, $\{\{\alpha_p\}_{p\in \F}, \{D_p\}_{p\in \F}\}$ is a partial action.

In \cite{GR1} it was shown that the partial skew group ring $D_0\rtimes_\alpha\F$ is isomorphic to the Leavitt path algebra $L_K(E)$.
More precisely, the map $\varphi : L_K(E) \to D_0 \rtimes_\alpha \F$ defined by $\varphi(e)=1_e\delta_e$, $\varphi(e^*)=1_{e^{-1}}\delta_{e^{-1}}$ for all $e\in E^1$ and $\varphi(v)=1_v\delta_0$ for all $v\in E^0$, was shown to be a $K$-algebra isomorphism.


\section{Leavitt Path algebras as partial skew groupoid rings}

Associated to a graph one can construct the free path groupoid. In this section we use this groupoid to realize the Leavitt path algebra associated to a graph as a partial skew ring groupoid ring. It is interesting to note that, under this realization, there exists a standard set of local units of the Leavitt path algebra such that each local unit lives on its own fiber, while if the Leavitt path algebra is realized as a partial skew group ring all local units live in the fiber associated to the neutral element of the free group. 

The definition of the free path groupoid can be found in \cite{PH}. Since this is the main groupoid in our work we recall its construction below.


Let $E = (E^0, E^1, r, s)$ be a graph and recall that $W$ denotes the set of finite paths in $E$. Let $(E^1)^*:=\{e^*:e\in E^1\}$ and extend the maps $r,s$ to $(E^1)^*$ by $s(e^*) = r(e) \text{ e } r(e^*) = s(e)$, for all $e \in E^1$.
For $\alpha = \alpha_1 \ldots \alpha_n \in W$ define $\alpha^* := \alpha_n^* \ldots \alpha_1^*$.
Let $$P = \{ \alpha_1 \ldots \alpha_n : \alpha_i  \in E^1 \cup (E^1)^* \cup E^0 \text{ e } r(\alpha_i) = s(\alpha_{i+1})\}.$$ 

\begin{example} In the graph below the element $e_1e_2e_3^{*}e_4$ belongs to $P$.
$$ \xymatrix{ \bullet \ar[r]^{e_1}  & \bullet \ar[r]^{e_2} & \bullet \\ &  & \bullet \ar[u]^{e_3} \ar[r]^{e_4} & \bullet } $$
\end{example}

In $P$ we define a concatenation operation, denoted by $\cdot $, so that 
\begin{itemize}
\item[] $e_1 \cdot e_2 = e_1e_2$ (concatenation);
\item[] $e \cdot r(e) = s(e) \cdot e = e$;
\item[] $e \cdot e^* = s(e)$;
\item[] $e^* \cdot e = r(e)$;
\end{itemize}
and we extend this operation to finite paths in the expected way. 

\begin{definition}
Given $a_1, \ldots, a_n , b \in E^1 \cup (E^1)^* \cup E^0$ and sequences of the form
\[ a_1 \cdots a_k b b^* a_{k+1}a_n \]
or
\[ a_1 \cdots a_k r(a_k) a_{k+1}\cdots a_n = a_1 \cdots a_k s(a_{k+1})a_{k+1} \cdots a_n \]
in $P$, we say that $a_1 \cdots a_k a_{k+1}a_n$ is a reduction of the these sequences. 
A sequence in $P$ is said irreducible if it can not be reduced. 
\end{definition}

\begin{definition}\label{exemplogrupoide} Let $G$ be the set of all irreducible sequences in $P$, $G^2 = \{(\alpha,\beta) \in G \times G : r(\alpha) = s(\beta) \}$ and define the partial groupoid operation  $G^2 \rightarrow G$ by $\alpha \cdot \beta = \operatorname{irr}(\alpha\beta)$, where $\operatorname{irr}(\alpha\beta)$ denotes the reduction of $\alpha\beta$. Then $G$ is the free path groupoid associated to the graph $E$.
\end{definition}

\begin{obs} To emphasize the notations we are using notice that, for all $\alpha \in G$, we have $s(\alpha) = \e(\alpha)$, $r(\alpha) = d(\alpha)$, $\alpha \alpha^* = s(\alpha) \text{ and } \alpha^* \alpha = r(\alpha).$
\end{obs}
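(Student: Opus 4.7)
The plan is to verify these four identifications by combining the groupoid axioms with the explicit description of the multiplication and inversion in $G$. The key observation is that in the free path groupoid the starred dual $\alpha^*$ naturally plays the role of the groupoid inverse $\alpha^{-1}$, while the vertex set $E^0$ coincides with the set $G_0$ of identities.

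First, I would identify $G_0$ with $E^0$. For any $v \in E^0$, the one-letter sequence $v$ is already irreducible, hence lies in $G$, and the reductions $v \cdot v = v$, $v \cdot \alpha = \alpha$ whenever $s(\alpha) = v$, and $\alpha \cdot v = \alpha$ whenever $r(\alpha) = v$, show that $v$ acts as an identity on either side. Combined with the computation of $\alpha \alpha^*$ and $\alpha^* \alpha$ below, this will show that every element produced by the groupoid axioms (iii) as an identity lies in $E^0$.

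Second, I would compute $\alpha \alpha^*$ and $\alpha^* \alpha$ for a generic irreducible word $\alpha = a_1 \cdots a_n$. Since $\alpha^* = a_n^* \cdots a_1^*$, the product $\alpha \alpha^*$ is the reduction of the concatenation $a_1 \cdots a_n a_n^* \cdots a_1^*$. The innermost step $a_n a_n^* = s(a_n)$ holds whether $a_n$ is an edge, a starred edge, or a vertex, using the defining reductions $e e^* = s(e)$, $e^* e = r(e) = s(e^*)$, and $vv = v$, respectively. Because $s(a_n) = r(a_{n-1})$ by the membership of $\alpha$ in $P$, a further reduction $a_{n-1} r(a_{n-1}) = a_{n-1}$ collapses the middle and yields $a_1 \cdots a_{n-1} a_{n-1}^* \cdots a_1^*$. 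An induction on $n$ then gives $\alpha \alpha^* = s(a_1) = s(\alpha)$, and a symmetric argument gives $\alpha^* \alpha = r(a_n) = r(\alpha)$.

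Finally, the uniqueness of $\e(\alpha)$, $d(\alpha)$ and $\alpha^{-1}$ guaranteed by the groupoid axioms (iii) and (iv), together with the two identities $\alpha \alpha^* = s(\alpha) \in G_0$ and $\alpha^* \alpha = r(\alpha) \in G_0$ just established, forces the identifications $\alpha^{-1} = \alpha^*$, $\e(\alpha) = s(\alpha)$, and $d(\alpha) = r(\alpha)$. The only mildly delicate point is the case analysis in the innermost reduction step, since each letter $a_i$ can be of three different types; in each case, however, the required relation is one of the defining reductions on $P$, so the induction runs through without any further obstacle.
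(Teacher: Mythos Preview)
Your verification is correct. In the paper this statement is presented as a notational remark without proof: the authors simply record, immediately after introducing the free path groupoid, that the graph-theoretic source and range maps coincide with the abstract groupoid identities $\e(\cdot)$ and $d(\cdot)$, and that the formal $*$-operation plays the role of the groupoid inverse. No argument is given.

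Your write-up supplies exactly the routine computation the authors suppress: the inductive cancellation $\operatorname{irr}(a_1\cdots a_n a_n^*\cdots a_1^*)=s(a_1)$ together with the uniqueness clauses in groupoid axioms (iii)--(iv). One small cosmetic point: in an irreducible word of length $\geq 2$ no letter can be a vertex (a vertex adjacent to another letter is always reducible), so the three-way case analysis on $a_n$ is only genuinely needed in the base case $n=1$; this does not affect correctness, only streamlines the presentation.
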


%







We now proceed to realize a Leavitt path algebra as a partial skew groupoid ring. The main ideas follow what was done for groups. We start by defining a partial groupoid action in the level of sets. 

Let $E = (E^0, E^1, r, s)$ be a graph and $G$ the associated free path groupoid, as in \ref{exemplogrupoide}. Let $X = \{ \xi \in W : r(\xi) \text{ is a sink}\} \cup \{v \in E^0 : v \text{ is a sink}\} \cup W^{\infty}$. For $a \in W \setminus E^0$, define $X_a := \{ \xi \in X: \xi_1 \ldots \xi_{|a|} = a\}$ and $X_{a^{-1}} := \{ \xi \in X: r(\xi) = r(a)\}$. For $ab^{-1} \in G$, with $a, b \in W \text{ and } a, b \notin E^0$, define $X_{ab^{-1}}:= X_a$. For $v \in E^0$, define $X_v := \{ \xi \in X: s(\xi) = v\}$. Finally, for any other $g \in G$, let $X_g = \varnothing$.

\begin{obs} Notice that the sets $X_v$ were not part of the definition of the partial action of the free group. So, to obtain the Leavitt path algebra, it was necessary to artificially add the span of the characteristic functions of the sets $X_v$ to the algebra where the free group acts (see the definition of $D_0$ in the previous section). This procedure will be avoided with our groupoid approach.
\end{obs}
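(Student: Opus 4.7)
The remark asserts that, unlike the free group construction of section 2.2, the groupoid framework does not require artificially appending the characteristic functions $1_{X_v}$ to the ring on which the partial action is defined. My plan is to justify this by identifying the structural reason: vertices are already units of the free path groupoid.

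I would first recall the obstruction in the group case. The free group $\F$ has only the empty word $0$ as its identity, and vertices $v \in E^0$ are not elements of $\F$, so no set $X_v$ appears in the family $\{X_c\}_{c \in \F}$. To encode the vertex idempotents of $L_K(E)$, the definition of $D_0$ is therefore enlarged by hand with the span of $\{1_v : v \in E^0\}$. By contrast, in the groupoid $G$ each vertex $v$ is itself an irreducible sequence satisfying $v \cdot v = v$ and $v = d(v) = \epsilon(v)$, so $E^0 \subseteq G_0$. Because the definition of a partial groupoid action demands the data $X_{\epsilon(g)}$ for every $g \in G$, the sets $X_v$ for $v \in E^0$ are built into the partial action from the outset, which is reflected in the preceding paragraph of the excerpt where $X_v = \{\xi \in X : s(\xi) = v\}$ is listed on equal footing with the other $X_g$.

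Passing from sets to functions, the fiber $F(X_v)$ at each $v \in G_0$ carries $1_{X_v}$ as its unit, and these sit naturally in the ring $R = \bigoplus_{v \in G_0} F(X_v)$ on which the partial action of $G$ will act. The elements $1_{X_v} \delta_v$ therefore appear in distinct fibers of the forthcoming partial skew groupoid ring $R \rtimes_\alpha G$, matching the multi-unit local structure of $L_K(E)$ without any artificial enlargement; this is exactly the contrast the remark is drawing. I do not expect a genuine obstacle for the remark itself, since it reduces to the elementary observation $E^0 \subseteq G_0$ together with the axiomatic requirement that $X_{\epsilon(g)}$ be part of the partial action data. The substantive work of the section still to come is to define the maps $\theta_g$, check the partial action axioms on $X$, and exhibit an isomorphism $L_K(E) \cong R \rtimes_\alpha G$ sending $v \mapsto 1_{X_v}\delta_v$, $e \mapsto 1_{X_e}\delta_e$, and $e^* \mapsto 1_{X_{e^{-1}}}\delta_{e^{-1}}$; there, the main difficulty will be verifying the Cuntz--Krieger relation (v) fiber by fiber, which is now cleaner precisely because each vertex lives on its own fiber.
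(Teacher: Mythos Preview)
This statement is a remark, and the paper offers no proof for it; it is purely expository commentary contrasting the group and groupoid constructions. Your elaboration is correct and captures precisely the point the authors are making: since $E^0 \subseteq G_0$, the sets $X_v$ are forced to appear among the $\{X_g\}_{g\in G}$ by the axioms of a partial groupoid action, whereas in the free-group setting $\F$ has only one identity and the vertices are absent, so the $1_v$ had to be inserted into $D_0$ by hand.

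One small inaccuracy worth flagging: you describe the ring carrying the partial action as $R=\bigoplus_{v\in G_0} F(X_v)$, but the paper does not take a direct sum over identities; it works first with $\mathcal{F}(X)$ (all $K$-valued functions on $X$) and then restricts to $D(X)=\operatorname{span}\{1_g : g\in G\}$. The point of the remark survives unchanged---$1_v$ lies in $D(X)$ automatically because $v\in G$---but your direct-sum picture is not the construction the paper actually uses.
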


Next we define the groupoid partial action.

For $v \in E^0$, define
\begin{center}\begin{tabular}{rrrl} 
$\theta_{v}:$ & $X_{v}$ & $\rightarrow$ & $X_v$ \\
&  $\xi$ & $\mapsto$ & $\xi.$
\end{tabular}
\end{center}
 
For $b \in W$, define
\begin{center}\begin{tabular}{rrrl} 
$\theta_{b}:$ & $X_{b^{-1}}$ & $\rightarrow$ & $X_b$ \\
&  $\xi$ & $\mapsto$ & $b\xi,$
\end{tabular}
\end{center}

and $\theta_{b^{-1}} : X_b \rightarrow X_{b^{-1}}$,  by 
\begin{equation*}
\theta_{b^{-1}}(\xi) =
\begin{cases}
\xi_{|b|+1}\xi_{|b|+2} \ldots & \text{, if } r(b) \text{ is not a sink}\\
r(b) & \text{, if } r(b) \text{ is a sink}.
\end{cases}
\end{equation*}

For $ab^{-1} \in G$, define
\begin{center}\begin{tabular}{rrrl}
$\theta_{ba^{-1}}:$ & $X_{ab^{-1}}$ & $\rightarrow$ & $X_{ba^{-1}}$ \\
 & $\xi$ & $\mapsto$ & $b\xi_{|a|+1}\xi_{|a|+2} \ldots $
\end{tabular}
\end{center}
\begin{center}\begin{tabular}{rrrl}
$\theta_{ab^{-1}}:$ & $X_{ba^{-1}}$ & $\rightarrow$ & $X_{ab^{-1}}$ \\
 & $\xi$ & $\mapsto$ & $a\xi_{|b|+1}\xi_{|b|+2} \ldots $.
\end{tabular}
\end{center}

\begin{proposition} $\theta = (\{ X_g\}_{g \in G} , \{ \theta_g\}_{g \in G})$ is a partial action of the groupoid $G$ in the set $X$.
\end{proposition}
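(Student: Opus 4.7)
The plan is to verify, in turn, the containment condition (I) from the definition of a partial groupoid action, the unit condition (II-i), the bijectivity of each $\theta_g$, and finally the two composition axioms (II-ii) and (II-iii).

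First, for (I), one unwraps the identities of the free path groupoid. For $g \in G$ with $X_g \neq \emptyset$, the source $\epsilon(g)$ is $v$ if $g = v \in E^0$, $s(a)$ if $g = a \in W\setminus E^0$, $r(a)$ if $g = a^{-1}$, and $s(a)$ if $g = ab^{-1}$. In each case the inclusion $X_g \subseteq X_{\epsilon(g)}$ is read directly off the definition: any $\xi \in X_a$ starts with $a$, so $s(\xi) = s(a)$; any $\xi \in X_{a^{-1}}$ satisfies $s(\xi) = r(a)$; and analogously for $X_{ab^{-1}}$ and $X_v$. For $g$ with $X_g = \emptyset$ the inclusion is trivial. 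Condition (II-i) is immediate, since $\theta_v$ is defined to be $\operatorname{Id}_{X_v}$ on each vertex $v \in G_0 = E^0$.

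Next I would check that each $\theta_g$ is a bijection by exhibiting $\theta_{g^{-1}}$ as its inverse. For $g = b \in W$ the composition $\theta_{b^{-1}} \circ \theta_b$ sends $\xi \mapsto b\xi \mapsto \xi$, with the obvious sink variant when $\xi = r(b)$, and symmetrically for $\theta_b \circ \theta_{b^{-1}}$. For $g = ab^{-1}$ the map factors as ``strip the prefix $b$, then prepend $a$'', and its inverse is the symmetric $\theta_{ba^{-1}}$ obtained by swapping the roles of $a$ and $b$.

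The main content, and the main obstacle, is axioms (II-ii) and (II-iii): for $(g,h)\in G^2$ one must show $\theta_h^{-1}(X_{g^{-1}}\cap X_h)\subseteq X_{(gh)^{-1}}$ and $\theta_g\circ\theta_h=\theta_{gh}$ on that set. I would write each element of $G$ with $X_g \neq \emptyset$ uniformly as $ab^{-1}$ with $a,b\in W\cup E^0$ and $r(a)=r(b)$ in reduced form (so a vertex $v$ appears as $vv^{-1}$, and a path $a$ as $a\, r(a)^{-1}$). A composable pair then has the form $(g,h) = (a_1b_1^{-1},\, a_2b_2^{-1})$ with $s(b_1)=s(a_2)$. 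Here $X_{g^{-1}}\cap X_h = X_{b_1}\cap X_{a_2}$ is nonempty only when one of $b_1,a_2$ is an initial subpath of the other, yielding two sub-cases: $b_1 = a_2 c$, in which $gh$ reduces to $a_1(b_2c)^{-1}$, and $a_2 = b_1 c$, in which $gh$ reduces to $(a_1c)b_2^{-1}$. In each sub-case, starting from a generic $\xi = a_2\eta \in X_{a_2}$ subject to $\theta_h(\xi) \in X_{b_1}$, a direct computation gives $\theta_g(\theta_h(\xi)) = \theta_{gh}(\xi)$: both sides produce the same element of $X$, which is the content of the assertion that $\theta$ implements on $X$ precisely the prefix surgery that the free path groupoid performs symbolically. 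When $X_{g^{-1}}\cap X_h = \emptyset$ (the diverging sub-case, where $gh$ does not reduce to an $ab^{-1}$ form and $X_{gh} = \emptyset$), both sides of (II-ii) are empty and the inclusion is vacuous.

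The main obstacle is keeping this case analysis compact while correctly covering the boundary configurations: vertices appearing in place of paths, the sink branch of $\theta_{b^{-1}}$, and the degenerate sub-cases $a_2 = b_1$ or trivial $c$. The underlying idea is simple, namely that $\theta$ merely performs on elements of $X$ the same cancellation $b^{-1}a = c$ or $b^{-1}a = c^{-1}$ that defines the operation in $G$, so each sub-case reduces to a short direct verification.
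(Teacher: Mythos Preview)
Your proposal is correct and follows essentially the same route as the paper: write the relevant groupoid elements in the normal form $ab^{-1}$, observe that $X_{g^{-1}}\cap X_h$ is controlled by the prefix relationship between the two ``middle'' paths, and split into the three cases (no prefix relation / one is a prefix of the other), with (II-iii) handled by the same case split. The only cosmetic difference is that you absorb vertices and bare paths into the uniform $ab^{-1}$ notation from the start, whereas the paper keeps them as explicit sub-sub-cases inside Case~2; this buys you a slightly shorter bookkeeping at the cost of having to track the degenerate instances you flag at the end.
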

\begin{proof}

It is straightforward to check that $X_{\e(g)}$ is a subset of $X$, that $X_g$ is a subset of $X_{\e(g)}$ and that $\theta_g : X_{g^{-1}} \rightarrow X_g$ is a bijection. We need to prove the three conditions a partial groupoid action must satisfy.


\begin{itemize}
\item[(i)] For all $v \in E^0$ we have that $\theta_v$ is the identity in $X_v$. 

Given $\xi \in X_v$, then $s(\xi) = v = r(\xi)$ and hence
\[ \theta_v(\xi) = v\xi = s(\xi)\xi = \xi . \]

\item[(ii)] $\theta_{h}^{-1}(X_{g^{-1}} \cap X_h) \subseteq X_{(gh)^{-1}}$.

Let $g = ab^{-1} , h = cd^{-1} \in G$. 

Since $X_{g^{-1}} \cap X_h = X_{ba^{-1}} \cap X_{cd^{-1}}$, we need to consider three cases: 

\textbf{Case 1:} If $b$ is not the beginning of $c$ and $c$ is not the beginning of $b$.

In this case, $X_{ba^{-1}} \cap X_{cd^{-1}} = \varnothing$.

\textbf{Caso 2:} If $b$ is the beginning of $c$ (we write $c = bc'$).

Notice that in this case we have that 

\begin{equation*}
X_{ba^{-1}} \cap X_{cd^{-1}} =
\begin{cases}
X_{a^{-1}} \cap X_{d^{-1}} & \text{, if } b =r(a) , c = r(d) \in E^0   \\
X_{a^{-1}} \cap X_c & \text{, if }  b \in E^0 , c \notin E^0 \\
X_b \cap X_c & \text{, if } b, c \notin E^0
\end{cases}
\end{equation*}
\begin{equation*}
=
\begin{cases}
X_{a^{-1}} \cap X_{d^{-1}} & \text{, if } b =r(a) , c = r(d) \in E^0   \\
X_{a^{-1}} \cap X_c & \text{, if }  b \in E^0 , c \notin E^0 \\
X_c & \text{, if } b, c \notin E^0
\end{cases}
\end{equation*}

Then,
\begin{equation*}
\theta_{h}^{-1}(X_{g^{-1}} \cap X_h) = 
\begin{cases}
\theta_{d^{-1}}^{-1}(X_{a^{-1}} \cap X_{d^{-1}}) \hspace{0.3cm} (1)\\
\theta_{cd^{-1}}( X_{a^{-1}} \cap X_c) \hspace{0.3cm} (2) \\
\theta_{cd^{-1}}^{-1}(X_c) \hspace{0.3cm} (3)
\end{cases}
\end{equation*}

In $(1)$ we have that
$ (gh)^{-1} = h^{-1}g^{-1} = dc^{-1}ba^{-1} = da^{-1}$
and hence $X_{(gh)^{-1}} = X_{da^{-1}} = X_d$. In $(2)$ we have that $(gh)^{-1} = h^{-1}g^{-1} = dc^{-1}ba^{-1} = dc^{-1}a^{-1}$ and hence $X_{(gh)^{-1}} = X_{dc^{-1}a^{-1}} = X_d$. Finally, in $(3)$ we have that $(gh)^{-1} = dc^{-1}ba^{-1} = dc'^{-1}b^{-1}ba^{-1} = dc'^{-1}r(b)a^{-1} = dc'^{-1}a^{-1}$. In any of the tree cases we obtain that $$\theta_{dc^{-1}} \subseteq X_{dc^{-1}} = X_{dc'^{-1}b^{-1}} = X_{(gh)^{-1}},$$ as desired.

\textbf{Case 3:} $c$ is the beginning of $b$ (we write $b = cb'$).

This case is analogous to case 2.

\item[(iii)] For all $\xi \in \theta_g^{-1}(X_g \cap X_{h^{-1}})$ we have that $\theta_h \circ \theta_g (\xi) = \theta_{hg}(\xi)$. 

This can be proved by separating into three cases as before and then verifying the desired equality. 








\end{itemize}

\end{proof}

We can now define the partial action in the ring level: 

Let $K$ be a field and let $\mathcal{F}(X)$ denote the set of all functions from $X$ to $K$. For each $X_g \neq 0$, let $\mathcal{F}(X_g) = \{ f \in \mathcal{F}(X): f \text{ vanishes outside of } X_g \} \subseteq \mathcal{F}(X)$ and for $X_g = \varnothing$, let $\mathcal{F}(X_g) = \{ \text{ null function }\}$. Now define 
\begin{center}\begin{tabular}{rrrl} 
$\alpha_g:$ & $\mathcal{F}(X_{g^{-1}})$ & $\rightarrow$ & $\mathcal{F}(X_g)$ \\
&  $f$ & $\mapsto$ & $f \circ \theta_{g^{-1}}$
\end{tabular}
\end{center}
and
\begin{center}\begin{tabular}{rrrl} 
$\alpha_v:$ & $\mathcal{F}(X_v)$ & $\rightarrow$ & $\mathcal{F}(X_v)$ \\
&  $f$ & $\mapsto$ & $f \circ \theta_v$
\end{tabular}
\end{center}

\begin{proposition}
$\alpha = (\{\mathcal{F}(X_g)\}_{g \in G}, \{ \alpha_g\}_{g \in G})$ is a partial action of the groupoid $G$ on the ring $\mathcal{F}(X)$.
\end{proposition}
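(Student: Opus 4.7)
The plan is to lift the set-level partial action $\theta$ established in the previous proposition to the ring level by pullback, and then read off each axiom of a partial groupoid action on a ring from the corresponding axiom already verified for $\theta$. First I would dispatch the structural requirements: for each $g\in G$ with $X_g\neq \varnothing$, the set $\mathcal{F}(X_g)$ is the collection of functions supported on $X_g$, which is clearly an ideal of $\mathcal{F}(X)$ under pointwise operations; since the set-level action gives $X_g\subseteq X_{\epsilon(g)}$, we obtain the chain of ideals $\mathcal{F}(X_g)\subseteq \mathcal{F}(X_{\epsilon(g)})\subseteq \mathcal{F}(X)$. Next, the map $\alpha_g(f)=f\circ \theta_{g^{-1}}$ (extended by zero outside $X_g$) is a $K$-algebra isomorphism $\mathcal{F}(X_{g^{-1}})\to \mathcal{F}(X_g)$ with inverse $\alpha_{g^{-1}}$, because $\theta_{g^{-1}}:X_g\to X_{g^{-1}}$ is a bijection and pullback preserves pointwise addition and multiplication.

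For axiom (i), I would simply observe that for each $v\in G_0=E^0$ the set-level map $\theta_v$ is the identity on $X_v$, so $\alpha_v(f)=f\circ \theta_v = f$ on $\mathcal{F}(X_v)$. For axiom (ii), the key identification is that $\mathcal{F}(X_{g^{-1}})\cap \mathcal{F}(X_h) = \mathcal{F}(X_{g^{-1}}\cap X_h)$, since a function lies in both sides precisely when it vanishes outside the intersection of supports. Hence if $f\in \alpha_h^{-1}(\mathcal{F}(X_{g^{-1}})\cap \mathcal{F}(X_h))$, then $\alpha_h(f)=f\circ \theta_{h^{-1}}$ vanishes outside $X_{g^{-1}}\cap X_h$, which forces $f$ to vanish outside $\theta_h^{-1}(X_{g^{-1}}\cap X_h)$. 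The set-level condition $\theta_h^{-1}(X_{g^{-1}}\cap X_h)\subseteq X_{(gh)^{-1}}$ then yields $f\in \mathcal{F}(X_{(gh)^{-1}})$, as required.

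For axiom (iii), given $f$ in the appropriate domain, I would compute
\[
\alpha_g(\alpha_h(f)) = (f\circ \theta_{h^{-1}})\circ \theta_{g^{-1}} = f\circ(\theta_{h^{-1}}\circ \theta_{g^{-1}}) = f\circ \theta_{(gh)^{-1}} = \alpha_{gh}(f),
\]
where the penultimate equality uses the cocycle identity for $\theta$ on the appropriate intersection of domains, already verified at the set level. Putting these pieces together establishes that $\alpha$ is a partial action of $G$ on the ring $\mathcal{F}(X)$.

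I do not expect a genuine technical obstacle here, since the construction is the standard functorial lifting from sets to function algebras; the only delicate point is pedantic bookkeeping, especially the identification $\mathcal{F}(X_{g^{-1}})\cap \mathcal{F}(X_h)=\mathcal{F}(X_{g^{-1}}\cap X_h)$ and the fact that $\alpha_g$ extended by zero is genuinely multiplicative, which must be verified once before the axioms are checked.
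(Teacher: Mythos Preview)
Your proposal is correct and follows exactly the standard route the paper has in mind; the paper itself does not spell out any details but simply refers to the usual techniques in the partial skew group ring literature, and what you wrote is precisely that standard functorial lifting from the set-level partial action to the function algebra. If anything, you have provided more detail than the paper does.
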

\begin{proof}

The proof of this proposition follows the usual techniques used in the partial skew group ring case, see for example \cite{ELQ, vivi} or \cite{Gonc}.








\end{proof}

As it happened in the group case, the partial action above is too "large". To get the Leavitt path algebra we need to do the following: Let $D(X) = \operatorname{span} \{ 1_g : g \in G\}$, $D_p = 1_pD(X) = \operatorname{span}\{ 1_p1_g : g \in G\}$, for all $p \in G$, and consider the restriction of $\alpha_p$ to the ideals $D_p$, 
\begin{center}\begin{tabular}{rrrl} 
$\alpha_p:$ & $D_{p^{-1}}$ & $\rightarrow$ & $D_p$ \\
&  $1_{p^{-1}}1_q$ & $\mapsto$ & $\alpha_p(1_{p^{-1}}1_p) = 1_p1_{pq}$
\end{tabular}
\end{center}

\begin{definition} Let $\tilde\alpha = (\{D_g\}_{g \in G} , \{ \alpha_g\}_{g \in G})$ be the partial action of the groupoid $G$ on the ring $D(X)$, as described above. Then we will denote the partial skew groupoid ring associated to $\tilde\alpha$ by $D(X)\rtimes_{\tilde{\alpha}}G $.
\end{definition}

\begin{obs} As we remarked before, when dealing with the partial action of the free group $\mathbb{F}$, it was necessary to define $D(X) = \operatorname{span}\{  \{ 1_p : p \in \mathbb{F} \setminus \{0\}\} \cup \{ 1_v : v \in E^0 \}\}$. With the groupoid approach the definition of $D(X)$ is more natural. Notice also that for any vertex $v$ the function $1_v \delta_v \in D(X) \rtimes_{\tilde{\alpha}} G$.
\end{obs}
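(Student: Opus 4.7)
The plan is to unpack the two assertions contained in this remark, since the statement is really a comparison together with a concrete membership claim. For the comparison, I would contrast the earlier group-level construction with the present groupoid-level one. In the free group $\mathbb{F}$ on $E^1$, the neutral element $0$ carries no vertex information, and no element of $\mathbb{F}$ encodes a vertex $v \in E^0$; consequently, in order for the span defining $D(X)$ to contain the characteristic functions $1_v=\chi_{X_v}$ of the vertex cylinder sets (which are needed because the Leavitt relations involve vertex idempotents), one is forced to adjoin the external set $\{1_v : v \in E^0\}$ to the naturally indexed family $\{1_p : p \in \mathbb{F}\setminus\{0\}\}$. In the groupoid setting, by contrast, the free path groupoid $G$ is built so that every vertex sits inside $G$ as an identity, that is $E^0 = G_0 \subseteq G$; therefore each $1_v$ already appears among the generators $\{1_g : g \in G\}$, and the single-span definition $D(X)=\operatorname{span}\{1_g : g\in G\}$ requires no external augmentation.

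For the second assertion I would verify directly that $1_v\delta_v$ is a valid element of $D(X)\rtimes_{\tilde\alpha}G$ for every $v\in E^0$. By the definition of the partial skew groupoid ring, a monomial $a_g\delta_g$ lies in $D(X)\rtimes_{\tilde\alpha}G$ exactly when $g\in G$ and $a_g\in D_g$. Since $v\in E^0 = G_0\subseteq G$, the symbol $\delta_v$ is meaningful, so the only thing to check is that $1_v\in D_v$. Using $D_v = 1_v D(X) = \operatorname{span}\{1_v 1_g : g\in G\}$, and taking $g=v\in G$, I obtain $1_v\cdot 1_v = 1_v \in D_v$, which finishes the verification.

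The key conceptual point, and the only one requiring emphasis, is that both halves of the remark reduce to the same structural fact: the inclusion $E^0 = G_0 \subseteq G$ that is automatic for the free path groupoid but unavailable for the free group. Once this inclusion is recorded, the naturality of $D(X)$ and the membership $1_v\delta_v\in D(X)\rtimes_{\tilde\alpha}G$ are both immediate, and I do not foresee any technical obstacle beyond writing this observation down.
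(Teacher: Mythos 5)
Your verification is correct and matches the (implicit) justification in the paper: the remark is stated without proof precisely because, as you observe, every vertex $v$ lies in $G$ as an identity of the free path groupoid, so $1_v$ is already among the generators $\{1_g : g\in G\}$ and $1_v = 1_v\cdot 1_v \in 1_vD(X) = D_v$, making $1_v\delta_v$ a legitimate element of $D(X)\rtimes_{\tilde\alpha}G$. Nothing further is needed.
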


\begin{theorem}\label{propnova}
Let $E$ be a graph. Then $D(X) \rtimes_{\tilde{\alpha}} G$ is isomorphic to $L_K(E)$ via an isomorphism $\phi$ such that, for all $e \in E^1$, $\phi(e) = 1_e\delta_e$, $\phi(e^*) = 1_{e^{-1}}\delta_{e^{-1}}$ and, for all $v \in E^0$, $\phi(v)=1_v\delta_v$.
\end{theorem}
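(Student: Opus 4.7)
The plan is to apply the universal property of $L_K(E)$ to produce $\phi$ on the generators, and then prove it is an isomorphism by handling surjectivity by direct computation and injectivity through the graded uniqueness theorem.

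First I would verify that the elements $1_v\delta_v$, $1_e\delta_e$, $1_{e^{-1}}\delta_{e^{-1}}$ satisfy the five Leavitt relations inside $D(X)\rtimes_{\tilde{\alpha}}G$. Relation (i) follows because for distinct vertices $v\neq w$ the pair $(v,w)$ is not in $G^2$, so $(1_v\delta_v)(1_w\delta_w)=0$, while $(1_v\delta_v)^2=1_v\delta_v$ since $\alpha_v$ is the identity. Relations (ii) and (iii) are direct groupoid-multiplication checks, using $X_e\subseteq X_{s(e)}$ and $X_{e^{-1}}\subseteq X_{r(e)}$ so that the characteristic functions multiply correctly. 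For relation (iv), when $e\neq f$ we have $(1_{e^{-1}}\delta_{e^{-1}})(1_f\delta_f)=0$ because either the composition $e^{-1}f$ is not defined in $G^2$ or the reduction forces the function part to vanish; when $e=f$ the computation yields $1_{r(e)}\delta_{r(e)}$ since $e^{-1}e=r(e)$. The only delicate relation is (v): for a regular vertex $v$ one checks $(1_e\delta_e)(1_{e^{-1}}\delta_{e^{-1}})=1_e\delta_v$ (using $ee^{-1}=v$ in $G$), and then sums over $e$ with $s(e)=v$ using the partition $X_v=\bigsqcup_{s(e)=v}X_e$, which holds because $v$ not being a sink forces every element of $X_v$ to begin with some outgoing edge.

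For surjectivity I would argue by induction on path length that $\phi(\mu)=1_\mu\delta_\mu$ and $\phi(\nu^*)=1_{\nu^{-1}}\delta_{\nu^{-1}}$ for finite paths $\mu,\nu$, so that $\phi(\mu\nu^*)=1_\mu 1_{\nu^{-1}}\delta_{\mu\nu^{-1}}$ whenever the groupoid product is defined. Every generator $1_g\delta_g$ of $D(X)\rtimes_{\tilde\alpha}G$ is then reachable, since each $g\in G$ is either a vertex, a finite path, its reverse, or a reduced product $ab^{-1}$ with $r(a)=r(b)$, and in each case $1_g\delta_g$ (or a sum of such) lies in the image of an element of the form $v$, $\mu$, $\nu^*$, or $\mu\nu^*$ in $L_K(E)$.

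For injectivity the natural tool is the graded uniqueness theorem for Leavitt path algebras. I would endow $D(X)\rtimes_{\tilde{\alpha}}G$ with the $\mathbb{Z}$-grading assigning to $1_g\delta_g$ the integer $|a|-|b|$ when $g=ab^{-1}$ (with vertex fibers in degree $0$); this is well-defined because the groupoid multiplication in $G$ adds these integers. With respect to the standard $\mathbb{Z}$-grading on $L_K(E)$ the map $\phi$ sends generators to homogeneous elements of the prescribed degree, so $\phi$ is graded. Finally, $\phi(v)=1_v\delta_v\neq 0$ for every $v\in E^0$, because $X_v$ is nonempty: either $v$ is a sink so $v\in X_v$, or $v$ emits an edge which we may extend to a path ending at a sink or to an infinite path in $X_v$. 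The graded uniqueness theorem then forces $\phi$ to be injective. The main obstacle I expect is the careful bookkeeping of the groupoid composition in the verification of relations (iv) and (v); once the $\mathbb{Z}$-grading is set up, the injectivity step is conceptually routine.
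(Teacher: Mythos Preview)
Your proof is correct, but your injectivity argument takes a genuinely different route from the paper's. The paper does not invoke the graded uniqueness theorem at all. Instead it factors $\phi$ through the already known isomorphism $L_K(E)\cong D_0\rtimes_\alpha\F$ (the partial skew group ring over the free group, recalled in Section~2), obtaining a map $\Gamma:D_0\rtimes_\alpha\F\to D(X)\rtimes_{\tilde\alpha}G$ with $\Gamma(1_e\delta_e)=1_e\delta_e$, $\Gamma(1_{e^{-1}}\delta_{e^{-1}})=1_{e^{-1}}\delta_{e^{-1}}$ and $\Gamma(1_v\delta_0)=1_v\delta_v$. Injectivity is then a bare-hands kernel computation: one checks that for $g\neq 0$ the map acts as $a_g\delta_g\mapsto a_g\delta_g$, while the neutral-fiber piece $a_0\delta_0$ is sent to $\sum_{v}1_va_0\,\delta_v$ (splitting the single $\delta_0$ into the various vertex fibers $\delta_v$), so $\Gamma(x)=0$ forces every $a_g$ and $a_0$ to vanish.

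What each approach buys: the paper's argument is elementary and self-contained once the group-ring realization from \cite{GR1} is in hand; it also makes transparent exactly how the groupoid picture refines the group picture (the single neutral fiber $D_0\delta_0$ breaks up as $\bigoplus_v D_v\delta_v$). Your approach via the $\Z$-grading and graded uniqueness is cleaner and more portable---it does not depend on the prior result $L_K(E)\cong D_0\rtimes_\alpha\F$, and the check that $X_v\neq\varnothing$ for every vertex is pleasant. One small point to make explicit in your write-up: the degree map $g\mapsto |a|-|b|$ is only defined for $g\in S$ (the set with $X_g\neq\varnothing$), but this suffices since $D_g=0$ for $g\notin S$, so the $\Z$-grading on $D(X)\rtimes_{\tilde\alpha}G$ is well defined.
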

\begin{proof}

Consider the family $\{1_e\delta_e  ,  1_{e^{-1}}\delta_{e^{-1}}, 1_v\delta_v  : e \in E^1, v \in E^0 \}$ in $D(X) \rtimes_{\tilde{\alpha}}G$. This family satisfy the relations defining the Leavitt path algebra $L_K(E)$. To give the reader an idea of the techniques involved we show how to prove the Cuntz-Krieger relation.

Let $v \in E^0$ be such that $0 < \# \{ e: s(e) = v\} < \infty$. Since $X_v = \displaystyle\bigcup_{e \in E^1  : s(e)= v}X_e$, we have that 
\[ \hspace{-25pt}\displaystyle\sum_{e \in E^1: s(e) = v} 1_e\delta_e1_{e^{-1}}\delta_{e^{-1}} =\sum_{e \in E^1: s(e) = v} 1_e\delta_{s(e)} = (\sum_{e \in E^1: s(e) = v}1_e )\delta_v = 1_v\delta_v . \]

Hence, by the universal property of $L_K(E)$ and the fact that $L_K(E) \cong D_0 \rtimes_{\alpha} \mathbb{F}$ via an isomorphism such that, for all $e \in E^1$, $\phi(e) = 1_e\delta_e$, $\phi(e^*) = 1_{e^{-1}}\delta_{e^{-1}}$ and, for all $v \in E^0$, $\phi(v)=1_v\delta_v$ (see section 2), we obtain an unique homomorphism \[\Gamma: D_0 \rtimes \mathbb{F} \to D(X) \rtimes G\] such  that $\Gamma(1_e\delta_e) = 1_e\delta_e$, $\Gamma(1_{e^{-1}}\delta_{e^{-1}}) = 1_{e^{-1}}\delta_{e^{-1}}$, for all $e \in E^1$, and $\Gamma(1_v\delta_0) = 1_v \delta_v$, for all $v \in E^0$.



It is clear that $\Gamma$ is surjective, since $\{ 1_e\delta_e, 1_{e^{-1}}\delta_{e^{-1}} : e \in E^1\}$ e $\{1_v\delta_v : v \in E^0 \}$ generates $D(X)\rtimes_{\alpha} G$. We will show that $\Gamma$ is injective.

Let $x \in \operatorname{Ker}\Gamma$. Then we can write $x = a_0\delta_0 + \displaystyle\sum a_g \delta_g$, where $a_g \in D_g$ e $a_0 = \left(\displaystyle\sum \lambda_{ab^{-1}}1_{ab^{-1}} + \displaystyle\sum \beta_w1_w\right) \in D_0$.

For $0 \neq g$, since $\Gamma$ is a homomorphism, it is straightforward to check that $\Gamma(a_g\delta_g) = a_g\delta_g$. We need to compute $\Gamma(a_0 \delta_0)$. 

Notice that given $1_a\delta_0 \in D_0\delta_0$, with $a \in W$, we have that 
\begin{align*}
1_a\delta_a \cdot 1_{a^{-1}}\delta_{a^{-1}} &= \alpha_a(\alpha_{a^{-1}}(1_a)1_{a^{-1}})\delta_0 \\
&= \alpha_a(1_{a^{-1}}1_{a^{-1}})\delta_0 \\
&= 1_a\delta_0
\end{align*}
and hence
 \begin{align*}
 \Gamma(1_a\delta_0) &= \Gamma(1_a\delta_a)\Gamma(1_{a^{-1}}\delta_{a^{-1}}) \\
 &= 1_a\delta_a \cdot 1_{a^{-1}}\delta_{a^{-1}} \\
 &= \alpha_a(\alpha_{a^{-1}}(1_a)1_{a^{-1}})\delta_{aa^{-1}} \\
 &= \alpha_a(1_{a^{-1}})\delta_{s(a)} \\
 &= 1_{a}\delta_{s(a)}.
 \end{align*}
Therefore, since $1_{cd^{-1}} = 1_c \text{ and } 1_{d^{-1}} = 1_{r(d)}$ for $c, d \in W$, we have that
\begin{align*}
\Gamma(a_0\delta_0) &= \Gamma((\displaystyle\sum \lambda_{ab^{-1}}1_{ab^{-1}} + \displaystyle\sum \beta_w1_w)\delta_0) \\
&= \displaystyle\sum \lambda_{ab^{-1}}1_{ab^{-1}}\delta_{s(a)} + \displaystyle\sum \beta_w1_w\delta_w \\
&= \displaystyle\sum_{v \in V'} 1_va_0\delta_v ,
\end{align*}
where $V' = \{  v \in V^0 : v = s(ab^{-1}) \text{ for } \lambda_{ab^{-1}} \text{ or } \beta_v \neq 0\}$.
 
We conclude that
\begin{align*}
\Gamma(x) = 0 &\iff \displaystyle\sum_{v \in V'} 1_va_0\delta_v + \displaystyle\sum a_g\delta_g = 0 \\
&\iff a_0 = 0 \text{ and } a_g = 0, \text{ for all } g 
\end{align*}
and hence $\Gamma $ is injective as desired.
\end{proof}

\section{Applications to free path groupoid graded isomorphisms of Leavitt path algebras.}

Given two graphs, say $E_1$ and $E_2$, by the results of the previous section, we can see the associated Leavitt path algebras as groupoid graded rings. In this section we show that if there exists a homomorphism between the associated free path groupoids, say $G_1$ and $G_2$, that preserves the set of finite words and is injective in this set, then there exists a (free path) groupoid graded isomorphism between the Leavitt path algebras (which preserves the generators). Furthermore, in the presence of condition (L) for $E_1$, we show the converse of our statement, that is, we show that if there exists a free path groupoid graded isomorphism, that preserves generators, from $L_K(E_1)$ to $L_K(E_2)$, then there exists an homomorphism between the associated free path groupoids that preserves the set of finite words and is injective in this set.   

We start the work by giving a few necessary definitions and showing a series of auxiliary results. 

\begin{definition}
Let $G, H$ be groupoids. A map $h: G \to H$ is a groupoid homomorphism if $(g_1, g_2) \in G^2$ implies that $(h(g_1), h(g_2)) \in H^2$ and $h(g_1g_2) = h(g_1)h(g_2)$.
\end{definition}

\begin{definition}\label{Sset} Given a graph $E$, let $G$ be the free path groupoid associated and $\theta= (\{ X_g\}_{g \in G} , \{ \theta_g\}_{g \in G})$ the partial groupoid action of $G$ in the set $X$, as defined in section 3. We define $S\subseteq G$ by \[S = \{g \in G : X_g \neq \varnothing\}.\]
\end{definition}

Throughout this section, for $i=1,2$, $E_i = (E^0_i, E^1_i, r, s)$ is a graph, $G_i$ is the free path groupoid associated to $E_i$ (as defined in \ref{exemplogrupoide}), $W_i \subseteq G_i$ is the set of all finite paths in $E_i$, $\theta_i = (\{ X_g\}_{g \in G_i} , \{ \theta_g\}_{g \in G_i})$ is the partial groupoid action of $G_i$ in the set $X_i$, as defined in section 3, and $S_i$ is associated with $G_i$ in the fashion of definition \ref{Sset} . Furthermore, $h: G_1 \to G_2$ is a groupoid homomorphism.

\begin{lema}\label{lemarangesource}
Let $G_i$ and $W_i$ be as above and $h: G_1 \to G_2$ be a groupoid homomorphism. Then for all $v \in E_1^0$, we have that $h(v) \in E_2^0$. Furthermore, for all $\alpha \in W_1$ we have that $h(s(\alpha)) = s(h(\alpha))$ and $h(r(\alpha)) = r(h(\alpha))$.
\begin{proof}

Let $\alpha \in G_2$ be such that $h(v) = \alpha$, where $v \in E_1^0$. Then 
\[\alpha = h(v) = h(vv) = h(v)h(v) = \alpha \alpha \]
and therefore $\alpha \in E_2^0$.

Now, since $h(\alpha) = h(s(\alpha)\alpha) = h(s(\alpha))h(\alpha)$, we have that
$s(h(\alpha))= h(s(\alpha))$. The equality involving $r(\alpha)$ follows analogously. 
\end{proof}
\end{lema}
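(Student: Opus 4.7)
The plan is to exploit the fact that the vertices of $E_1$ are precisely the identities of the groupoid $G_1$, and that any groupoid homomorphism must carry idempotents to identities. For the first claim, I would begin with the observation that for $v \in E_1^0$ we have $v\cdot v = v$ in $G_1$ (this is the groupoid operation applied to the vertex viewed as an identity). Applying $h$ and using the homomorphism property gives $h(v)\cdot h(v) = h(v)$, so $h(v)$ is an idempotent in $G_2$.

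The short auxiliary fact I need is that every idempotent $\beta$ of a groupoid is necessarily an identity. This follows by noting that $\beta\cdot \beta$ being defined forces $d(\beta) = \e(\beta)$, and then multiplying $\beta\cdot\beta = \beta$ on the right by $\beta^{-1}$ yields $\beta\cdot\e(\beta) = \e(\beta)$; since $\beta\cdot d(\beta) = \beta$ by the groupoid axioms and $d(\beta) = \e(\beta)$, we conclude $\beta = \e(\beta) \in (G_2)_0$. Applying this to $\beta = h(v)$ gives $h(v) \in E_2^0$.

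For the second claim, I would use the defining identity $s(\alpha)\cdot \alpha = \alpha$ valid for any $\alpha \in W_1$ (by the extension of the concatenation operation defined in the paper, $s(\alpha)$ acts as a left identity on $\alpha$). Applying $h$ gives $h(s(\alpha))\cdot h(\alpha) = h(\alpha)$. Since the product on the left is defined we have $d(h(s(\alpha))) = \e(h(\alpha))$, and by the first part $h(s(\alpha))$ is an identity, hence equal to its own $d$. Therefore $h(s(\alpha)) = \e(h(\alpha)) = s(h(\alpha))$. The statement for $r$ is entirely symmetric, starting from $\alpha \cdot r(\alpha) = \alpha$ and using $\e$ of an identity equals itself.

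There is no serious obstacle here; the only slightly non-trivial step is the idempotent-implies-identity lemma for groupoids, which is a one-line consequence of the axioms. Everything else is a direct transport of algebraic identities along $h$.
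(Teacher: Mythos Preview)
Your proposal is correct and follows essentially the same approach as the paper: both argue that $h(v)$ is idempotent via $h(v)=h(vv)=h(v)h(v)$ and hence an identity of $G_2$, and both deduce $h(s(\alpha))=s(h(\alpha))$ from $h(s(\alpha))h(\alpha)=h(\alpha)$ together with the uniqueness of the left identity $\e(h(\alpha))$. The only difference is cosmetic: you spell out the ``idempotent $\Rightarrow$ identity'' step explicitly, whereas the paper leaves it implicit.
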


\begin{corolario}
Given $\alpha \in W_1$, it follows that $h(\alpha^{-1}) = h(\alpha)^{-1}$ and hence, for every $(\alpha, \beta^{-1}) \in G_1^2$, it holds that $h(\alpha\beta^{-1}) = h(\alpha)h(\beta)^{-1}$.
\end{corolario}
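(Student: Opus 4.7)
The plan is to deduce both identities from the defining properties of inverses in a groupoid and Lemma \ref{lemarangesource}. The key observation is that in a groupoid the inverse of an element $g$ is uniquely characterized by the equations $g\,g^{-1}=\epsilon(g)$ and $g^{-1}g=d(g)$, so to prove $h(\alpha^{-1})=h(\alpha)^{-1}$ it suffices to check that $h(\alpha^{-1})$ satisfies these two equations with respect to $h(\alpha)$ in $G_2$.

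First I would verify that the pair $(\alpha,\alpha^{-1})$ lies in $G_1^2$, which is immediate since $d(\alpha)=r(\alpha)=s(\alpha^{-1})=\epsilon(\alpha^{-1})$, and similarly $(\alpha^{-1},\alpha)\in G_1^{2}$. Applying the groupoid-homomorphism property together with Lemma \ref{lemarangesource} then gives
\[
h(\alpha)\,h(\alpha^{-1}) \;=\; h(\alpha\alpha^{-1}) \;=\; h(s(\alpha)) \;=\; s(h(\alpha)) \;=\; \epsilon(h(\alpha)),
\]
and in the same way
\[
h(\alpha^{-1})\,h(\alpha) \;=\; h(\alpha^{-1}\alpha) \;=\; h(r(\alpha)) \;=\; r(h(\alpha)) \;=\; d(h(\alpha)).
\]
By the uniqueness of inverses in the groupoid $G_2$, this forces $h(\alpha^{-1}) = h(\alpha)^{-1}$.

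For the second assertion, given $(\alpha,\beta^{-1})\in G_1^2$ with $\alpha,\beta\in W_1$, the homomorphism property applies directly to the admissible pair $(\alpha,\beta^{-1})$, yielding $h(\alpha\beta^{-1}) = h(\alpha)\,h(\beta^{-1})$, and substituting the first part $h(\beta^{-1})=h(\beta)^{-1}$ gives the desired equality $h(\alpha\beta^{-1})=h(\alpha)h(\beta)^{-1}$.

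There is no real obstacle here; the only subtlety is to carry along the correct source/range identifications to ensure the pairs we multiply are admissible in $G_2$, and this is precisely what the previous lemma provides. The proof is therefore essentially a bookkeeping exercise in the groupoid axioms combined with $h$ sending vertices to vertices.
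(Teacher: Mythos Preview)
Your proof is correct and is exactly the argument the paper has in mind: the corollary is stated there without proof, as an immediate consequence of Lemma~\ref{lemarangesource} via uniqueness of inverses in a groupoid, which is precisely what you carry out.
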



Assuming that the homomorphism $h$ is injective we can also prove the following:

\begin{lema} Suppose that $h: G_1 \to G_2$ is injective in $W_1$. Then $(\alpha, \beta) \in G_1^2$ if, and only if, $(h(\alpha),h(\beta)) \in G_2^2$. Furthermore, for every $\alpha, \beta \in W_1$, we have that $r(\alpha) = r(\beta)$ if, and only if, $r(h(\alpha)) = r(h(\beta))$.\label{lemacomponivel}
\end{lema}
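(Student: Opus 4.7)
The plan is to reduce both equivalences to the source/range compatibility of $h$ combined with the injectivity hypothesis on $W_1$. Crucially, I would first note that the source/range statement of Lemma \ref{lemarangesource} actually extends from $W_1$ to the entire groupoid $G_1$: for any $\alpha \in G_1$, the identity $\alpha = s(\alpha)\alpha$ in the groupoid gives $h(\alpha) = h(s(\alpha))h(\alpha)$ after applying $h$, which forces $s(h(\alpha)) = h(s(\alpha))$, and analogously $r(h(\alpha)) = h(r(\alpha))$. Since $s(\alpha), r(\alpha) \in E_1^0$ and Lemma \ref{lemarangesource} already shows $h(E_1^0) \subseteq E_2^0$, these are genuine vertex equalities in $G_2$.

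For the first equivalence, the forward implication is just the defining property of a groupoid homomorphism (the excerpt's definition explicitly builds this in). For the converse, assume $(h(\alpha), h(\beta)) \in G_2^2$, so that $r(h(\alpha)) = s(h(\beta))$. By the extended compatibility, this reads $h(r(\alpha)) = h(s(\beta))$. Since $r(\alpha), s(\beta) \in E_1^0 \subseteq W_1$ and $h$ is injective on $W_1$, we conclude $r(\alpha) = s(\beta)$, hence $(\alpha,\beta) \in G_1^2$.

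For the second equivalence, the forward direction follows by applying $h$ to $r(\alpha) = r(\beta)$ and using $h(r(\alpha)) = r(h(\alpha))$. The reverse direction uses the same source/range compatibility to rewrite $r(h(\alpha)) = r(h(\beta))$ as $h(r(\alpha)) = h(r(\beta))$, and then injectivity of $h$ on $W_1 \supseteq E_1^0$ delivers $r(\alpha) = r(\beta)$.

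There is no real obstacle here; the only subtlety is recognizing that Lemma \ref{lemarangesource} extends to all of $G_1$ (so that we may talk about $s(h(\alpha))$ and $r(h(\alpha))$ for arbitrary $\alpha, \beta \in G_1$ in part (1)), and that vertices lie in $W_1$ so the injectivity hypothesis applies to source/range comparisons. Everything else is a direct unwinding of definitions.
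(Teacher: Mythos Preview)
Your proposal is correct and follows essentially the same approach as the paper: both arguments reduce everything to the identities $h(s(\alpha))=s(h(\alpha))$, $h(r(\alpha))=r(h(\alpha))$ together with injectivity of $h$ on vertices (which lie in $W_1$). Your version is in fact slightly tidier in two places: you make explicit that the source/range compatibility of Lemma~\ref{lemarangesource} extends from $W_1$ to all of $G_1$ (which the paper uses tacitly in the first equivalence), and for the forward direction of the second equivalence you simply apply $h$ to $r(\alpha)=r(\beta)$, whereas the paper routes through composability of $(\alpha,\beta^{-1})$ and the identity $h(\beta^{-1})=h(\beta)^{-1}$ to reach the same conclusion.
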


\begin{proof}
Suppose that $(\alpha, \beta) \notin G_1^2$. Then $r(\alpha) \neq s(\beta)$ and, since $h$ is injective in $W_1$, it follows that 
\[r(h(\alpha)) = h(r(\alpha)) \neq h(s(\beta)) = s(h(\beta))\]
and hence $(h(\alpha), h(\beta)) \notin G_2^2$.

Now suppose that $r(\alpha) = r(\beta)$. Then $(\alpha, \beta^{-1}) \in G_1^2$ and, since $h$ is a homomorphism, we have that $(h(\alpha), h(\beta^{-1})) \in G_2^2$. Therefore $r(h(\alpha)) = s(h(\beta^{-1})) = r(h(\beta^{-1})^{-1})) = r(h(\beta))$.

Finally, suppose that $r(\alpha) \neq r(\beta)$. Then, since $h$ is injective in $W_1$, we have that $h(r(\alpha)) \neq h(r(\beta))$ and hence
\[r(h(\alpha)) = h(r(\alpha)) \neq h(r(\beta)) = r(h(\beta)).\qedhere\]

\end{proof}


If we further assume that the homomorphism $h$ preserves the set of finite paths in the graph $E_1$ we can prove the following:

\begin{lema}\label{lematamanho}
Suppose that $h: G_1 \to G_2$ is injective in $W_1$ and $h(W_1) = W_2$. Then, for every $\alpha \in W_1$, we gave that $|\alpha| = 1$ if, and only if, $|h(\alpha)| = 1$.
\end{lema}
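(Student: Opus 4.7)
The plan is to establish the stronger fact that $h$ preserves the length of every finite path, namely $|h(\alpha)|=|\alpha|$ for all $\alpha\in W_1$, which immediately implies the lemma by specializing to $|\alpha|=1$ and to $|h(\alpha)|=1$. I would prove this by two opposite inequalities, using the surjectivity $h(W_1)=W_2$ for one and the injectivity of $h|_{W_1}$ for the other, together with the composability criterion from Lemma~\ref{lemacomponivel}.

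First, for $|h(\alpha)|\geq |\alpha|$: write $\alpha=\alpha_1\cdots\alpha_n$ with $\alpha_i\in E_1^1$ and $n=|\alpha|$, so that $h(\alpha)=h(\alpha_1)\cdots h(\alpha_n)$ in $G_2$ by the homomorphism property (the composability of consecutive factors is preserved automatically, since $h$ is a groupoid homomorphism). Each $h(\alpha_i)$ lies in $h(W_1)=W_2$, hence is a nonempty forward path with $|h(\alpha_i)|\geq 1$. Since each $h(\alpha_i)$ consists only of edges (no ghost edges, no interior vertices), the concatenation $h(\alpha_1)\cdots h(\alpha_n)$ is already reduced in $G_2$, so $|h(\alpha)|=\sum_i|h(\alpha_i)|\geq n=|\alpha|$.

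For the reverse inequality $|\alpha|\geq |h(\alpha)|$: write $h(\alpha)=\beta_1\cdots\beta_m$ with $\beta_i\in E_2^1$ and $m=|h(\alpha)|$. The assumption $h(W_1)=W_2$ gives elements $\gamma_i\in W_1$ with $h(\gamma_i)=\beta_i$, and in particular $|\gamma_i|\geq 1$. The composability $(\beta_i,\beta_{i+1})\in G_2^2$ lifts, via Lemma~\ref{lemacomponivel}, to $(\gamma_i,\gamma_{i+1})\in G_1^2$. Therefore $\gamma:=\gamma_1\cdots\gamma_m$ is a well-defined element of $W_1$ with $h(\gamma)=h(\gamma_1)\cdots h(\gamma_m)=\beta_1\cdots\beta_m=h(\alpha)$; injectivity of $h$ on $W_1$ forces $\gamma=\alpha$, whence $|\alpha|=\sum_i|\gamma_i|\geq m=|h(\alpha)|$.

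Combining both inequalities yields $|h(\alpha)|=|\alpha|$ and the lemma follows. The only delicate point is to confirm that no reduction occurs when concatenating elements of $W_2$ in the ambient groupoid operation $\operatorname{irr}(\cdot)$, which is immediate because such elements contain no inverse edges and no interior vertices; and to lift composability from $G_2^2$ back to $G_1^2$, which is exactly what Lemma~\ref{lemacomponivel} provides.
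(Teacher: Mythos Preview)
Your argument is essentially correct and in fact establishes the paper's subsequent corollary (that $|h(\alpha)|=|\alpha|$ for all $\alpha\in W_1$) directly, from which the lemma is immediate. The paper goes the other way: it first proves the length-$1$ biconditional by contradiction (splitting a putative $h(e)=\alpha_1\alpha_2$ via surjectivity, then using injectivity to force $f_1f_2=e$), and only afterwards records the length-preserving corollary. Your two-inequality route is a clean reorganization of the same ingredients; the second inequality is exactly the paper's forward-direction argument generalized from $|\alpha|=1$ to arbitrary $|\alpha|$.

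There is one small gap. In the first inequality you assert that each $h(\alpha_i)\in W_2$ is ``a nonempty forward path with $|h(\alpha_i)|\geq 1$''. In this paper $W_2$ contains the vertices $E_2^0$ (see the proof of the lemma itself, which treats the case $\alpha\in E_1^0$), so membership in $W_2$ alone does not give $|h(\alpha_i)|\geq 1$; you must rule out $h(e)\in E_2^0$ for $e\in E_1^1$. This is easy: if $h(e)=v\in E_2^0$ then $(h(e),h(e))\in G_2^2$, so by Lemma~\ref{lemacomponivel} $(e,e)\in G_1^2$, whence $ee\in W_1$ with $h(ee)=vv=v=h(e)$, contradicting injectivity on $W_1$. (The paper's own proof glosses over the composability of $ee$ at the analogous step.) With that one sentence added, your proof is complete.
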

\begin{proof}

Let $e \in E_1^1$. Suppose that $h(e) = \alpha_1 \alpha_2 \in W_2$, with $\alpha_1, \alpha_2 \notin E_2^0$. Then there exist $f_1, f_2 \in W_1$ such that $h(f_1) = \alpha_1 \text{ and } h(f_2) = \alpha_2$. Notice that by Lemma \ref{lemarangesource} $f_1, f_2 \notin E_1^0$. Therefore, since \[h(f_1f_2) = h(f_1)h(f_2) = \alpha_1 \alpha_2 = h(e),\]
we have that $f_1f_2 = e$ and hence $f_1 \in E_1^0 \text{ or } f_2 \in E_1^0$, what contradicts our assumption. Now, if $h(e) \in E_2^0$ then
\[h(ee) = h(e)h(e) = h(e)\]
and hence $e \in E_1^0$, which is a contradiction. The only possibility left is that $|h(e)|=1$.

Conversely, suppose that $\alpha \in W_1$ is such that $|h(\alpha)| = 1$. Notice that if $\alpha \in E_1^0$ then $h(\alpha) \in E_2^0$ and $|h(\alpha)| = 0$. So suppose that $|\alpha| = n, n \geq 2$. Then $\alpha = \alpha_1 \ldots\alpha_n$, for $\alpha_i \in E_1^1$, and hence
\[h(\alpha) = h(\alpha_1\ldots \alpha_n) = h(\alpha_1)\ldots h(\alpha_n).\]
and $1 = |h(\alpha)| = |h(\alpha_1)\ldots h(\alpha_n)| = n$, a contradiction. Therefore $|\alpha|=1$. 
\end{proof}

\begin{corolario} Let $h: G_1 \to G_2$ be injective in $W_1$ and such that $h(W_1) = W_2$. Then $|\alpha| = |h(\alpha)|$ for all $\alpha \in W_1$.
 \end{corolario}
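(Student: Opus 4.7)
The plan is to prove $|\alpha|=|h(\alpha)|$ by induction on $n:=|\alpha|$, leveraging Lemmas \ref{lemarangesource} and \ref{lematamanho} as the base cases and using the groupoid-homomorphism property to reduce the general case to the edge case.

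For the base cases, when $n=0$ we have $\alpha\in E_1^0$, and Lemma \ref{lemarangesource} gives $h(\alpha)\in E_2^0$, hence $|h(\alpha)|=0$. When $n=1$ we have $\alpha\in E_1^1$, and Lemma \ref{lematamanho} directly yields $|h(\alpha)|=1$. These dispose of the low-length cases essentially for free.

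For the inductive step with $n\geq 2$, I would write $\alpha=\alpha_1\cdots\alpha_n$ with $\alpha_i\in E_1^1$ and $r(\alpha_i)=s(\alpha_{i+1})$. Since $h$ is a groupoid homomorphism and the consecutive products $\alpha_1\cdots\alpha_k$ are all composable, an easy secondary induction yields $h(\alpha)=h(\alpha_1)\cdots h(\alpha_n)$. By Lemma \ref{lematamanho} each $h(\alpha_i)$ has length one, i.e.\ lies in $E_2^1$, and by Lemma \ref{lemarangesource} one has $r(h(\alpha_i))=h(r(\alpha_i))=h(s(\alpha_{i+1}))=s(h(\alpha_{i+1}))$, so successive factors concatenate as edges in $E_2$. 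Consequently $h(\alpha)$ is a concatenation of $n$ edges and so $|h(\alpha)|=n$.

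The only subtle point—and the closest thing to an obstacle—is making sure the word $h(\alpha_1)\cdots h(\alpha_n)$ is genuinely irreducible in $G_2$, so that its length really is the sum $\sum |h(\alpha_i)|=n$ rather than something smaller after reduction. But since each $h(\alpha_i)\in E_2^1$ is a positive edge (no starred letters appear), no cancellation of the form $ee^*$ or $e^*e$ can occur, and the range/source matching from Lemma \ref{lemarangesource} prevents any vertex-reductions as well. Hence no reduction takes place and the length equality follows immediately.
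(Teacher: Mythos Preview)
Your proof is correct and follows essentially the same approach the paper has in mind: the paper states this corollary without proof, treating it as immediate from Lemma~\ref{lematamanho}, and your argument---decompose $\alpha$ into edges, apply the homomorphism property to get $h(\alpha)=h(\alpha_1)\cdots h(\alpha_n)$, invoke Lemma~\ref{lematamanho} to see each factor is a single edge, and observe that a concatenation of positive edges admits no reduction---is exactly the intended justification spelled out in full.
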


With the above conditions on the homomorphism $h: G_1 \to G_2$ we can actually show that $h$ is a bijection. We do this below.


\begin{proposition}
Suppose that $h: G_1 \to G_2$ is a homomorphism such that $h|_{W_1}$ is injective and $h(W_1) = W_2$. Then $h$ is bijective.
\end{proposition}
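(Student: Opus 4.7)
The plan is to establish bijectivity by working with the reduced-word normal form in the free path groupoids. The preliminary observation is that $h$ maps letters (elements of $E_1^0\cup E_1^1\cup (E_1^1)^*$) to letters: by Lemma \ref{lemarangesource}, $h(E_1^0)\subseteq E_2^0$; by Lemma \ref{lematamanho}, $h(E_1^1)\subseteq E_2^1$; and by the corollary to Lemma \ref{lemarangesource}, $h(e^*)=h(e)^{-1}\in (E_2^1)^*$ for each $e\in E_1^1$. The hypothesis that $h|_{W_1}$ is injective then yields injectivity of $h$ on each of these three letter sets.

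For injectivity on all of $G_1$, given $g_1,g_2\in G_1$ with $h(g_1)=h(g_2)$, I would write each $g_j$ in its unique reduced form $g_j=a_1^{(j)}\cdots a_{n_j}^{(j)}$ as a product of letters. The crucial claim is that the concatenation $h(a_1^{(j)})\cdots h(a_{n_j}^{(j)})$ is already reduced in $G_2$: any accidental reduction $h(a_i^{(j)})h(a_{i+1}^{(j)})$ would force, through a short case analysis using the injectivity of $h$ on $E_1^1$ together with the type-preservation above (edges go to edges, reverse edges to reverse edges), the pair $a_i^{(j)},a_{i+1}^{(j)}$ to be mutually inverse letters, contradicting reducedness of $g_j$. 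Uniqueness of reduced form in $G_2$ then gives $n_1=n_2$ and $h(a_i^{(1)})=h(a_i^{(2)})$ for each $i$, whence $g_1=g_2$ by letter-wise injectivity.

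For surjectivity, given $g\in G_2$, I would treat the case $g=v\in E_2^0$ separately: since $h(W_1)=W_2$ contains the length-zero elements $E_2^0$ and length is preserved (corollary to Lemma \ref{lematamanho}), some $w\in E_1^0$ satisfies $h(w)=v$. Otherwise write $g=c_1\cdots c_k$ in reduced form and, for each letter $c_i$, use $h(W_1)=W_2$ combined with length preservation to produce a letter $a_i\in E_1^1\cup(E_1^1)^*$ with $h(a_i)=c_i$. Composability $r(a_i)=s(a_{i+1})$ follows from composability of the $c_i$ together with injectivity of $h$ on $E_1^0$, and reducedness of $a_1\cdots a_k$ follows by contraposition, since any reduction in $a_i a_{i+1}$ would yield one in $c_i c_{i+1}=h(a_i)h(a_{i+1})$.

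The main obstacle I expect is the ``no accidental reductions'' step underlying injectivity: two distinct letters of $G_1$ cannot map to mutually inverse letters of $G_2$, and verifying this rests on both the injectivity of $h|_{W_1}$ restricted to $E_1^1$ and the type-preservation of $h$ on letters (edges never cross over to reverse edges under $h$). Once this is in place, the rest reduces to bookkeeping with normal forms.
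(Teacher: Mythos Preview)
Your proposal is correct and follows essentially the same route as the paper: both arguments rest on the observation that $h$ preserves letter type (vertices, edges, reverse edges) and hence carries reduced words to reduced words, after which bijectivity is bookkeeping with normal forms. The only cosmetic difference is ordering---the paper establishes surjectivity first and then derives injectivity as uniqueness of the constructed preimage, whereas you prove injectivity directly---and you are somewhat more explicit about the type-preservation case analysis that the paper compresses into a single appeal to injectivity of $h|_{W_1}$.
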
\label{lemasobrejetiva}
\begin{proof}
First we prove that $h$ is surjective. 
Let $\gamma \in G_2$. Then we can write $\gamma = \gamma_1 \ldots \gamma_n$, where $\gamma_i \in E_2^0\cup E_2^1 \cup (E_2^1)^*$. Since $h(W_1) = W_2$, $h(W_1^*) = W_2^*$ and $h(E_1^0)=E_2^0$, we have that for each $i$ there exists an unique $e_i \in W_1\cup W_1^*$ such that $\gamma_i = h(e_i)$. By Lemma \ref{lematamanho}, we have that $e_i \in E_1^0 \cup E_1^1 \cup (E_1^1)^*$ and, by Lema \ref{lemacomponivel}, the element $e_1 \ldots e_n \in G_1$. Therefore
\[ h(e_1 \ldots e_n) = h(e_1)\ldots h(e_n) = \gamma_1 \ldots \gamma_n = \gamma.\]

Now suppose that there exists another $f_1\ldots f_k \in G_1$ such that $\gamma = h(f_1 \ldots f_k)= h(f_1)\ldots h(f_k)$. Clearly $k\geq n$, since $\gamma$ is in reduced form. If $k>n$, then $h(f_1)\ldots h(f_k)$ is not in reduced form and therefore there exists $j$ such that $h(f_j)h(f_{j+1})$ is equal to either $s(h(f_j))$ or $r(h(f_{j+1}))$. Suppose, without loss of generality, that $h(f_j)h(f_{j+1})=s(h(f_j))$. Then $h(f_j f_{j+1})=h(s(f_j))$ and, since $h$ is 1-1 in $W_1$, we have that $f_j f_{j+1} = s(f_j)$. Hence $f_1 \ldots f_k$ is not in reduced form, what is a contradiction. Therefore $k=n$ and it follows by Lemma \ref{lematamanho} that $h(f_i)=\gamma_i=h(e_i)$ for all $i$ and hence $f_i = e_i$ for all $i$ (from injectivity of $h$ in $W_1$).

\qedhere
\end{proof}


Our main results in this section concern groupoid graded homomorphisms. We give this definition below and then proceed to prove the two theorems of the section.


\begin{definition}
Let $A = \displaystyle\bigoplus_{g \in G_1} A_g$ and $B = \displaystyle\bigoplus_{h \in G_2}B_h$ be graded algebras by the groupoids $G_1 \text{ e } G_2$, respectively. A graded groupoid homomorphism between $A$ and $B$ is a pair $(\varphi, h)$, where $\varphi: A \to B$ is an algebra homomorphism, $h: G_1 \to G_2 $ is a groupoid homomorphism and $\varphi(A_g)\subseteq B_{h(g)}$, for all $g \in G_1$.
\end{definition}

\begin{theorem}
Let $E_1$ and $E_2$ be graphs and $G_1, G_2$ the associated free path groupoids, respectively. If there exists a groupoid homomorphism $h: G_1 \to G_2$ such that $h|_{W_1}$ is injective and $h(W_1) = W_2$ (where $W_i$ denotes the set of finite paths in $E_i$, $i=1,2$), then the associated Leavitt path algebras $D(X_1) \rtimes G_1$ and $D(X_2) \rtimes G_2$ are groupoid graded isomorphic, via an isomorphism $\varphi : D(X_1)\rtimes G_1 \to D(X_2) \rtimes G_2$ such that $\varphi(1_e\delta_e) = 1_{h(e)}\delta_{h(e)}$ and $\varphi(1_v \delta_v) = 1_{h(v)}\delta_{h(v)}$.
\end{theorem}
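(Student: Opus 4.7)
The natural plan is to define $\varphi$ on generators and invoke the universal property of the Leavitt path algebra (via Theorem \ref{propnova}) to extend it, then construct an inverse. Concretely, I would consider in $D(X_2)\rtimes G_2$ the family
\[
\{\,1_{h(v)}\delta_{h(v)}\,:\,v\in E_1^0\,\}\cup\{\,1_{h(e)}\delta_{h(e)},\;1_{h(e)^{-1}}\delta_{h(e)^{-1}}\,:\,e\in E_1^1\,\}
\]
and verify that these elements satisfy the five defining relations of $L_K(E_1)$. By Lemma \ref{lemarangesource}, $h$ sends $E_1^0$ into $E_2^0$ and commutes with source and range, and by Lemma \ref{lematamanho} edges go to edges. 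The relations of types (i)-(iv) then reduce to routine groupoid product computations in $D(X_2)\rtimes G_2$ analogous to those performed in the proof of Theorem \ref{propnova}; for instance, orthogonality of distinct vertices follows because $h$ is injective on $E_1^0\subseteq W_1$, so $d(h(v))=h(v)\neq h(w)=\epsilon(h(w))$ forces the product to vanish.

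The main obstacle is the Cuntz-Krieger relation. For $v\in E_1^0$ emitting finitely many edges, computing $1_{h(e)}\delta_{h(e)}\cdot 1_{h(e)^{-1}}\delta_{h(e)^{-1}}=1_{h(e)}\delta_{h(s(e))}=1_{h(e)}\delta_{h(v)}$ reduces the verification to the set equality
\[
X_{h(v)} \;=\; \bigcup_{\{e\in E_1^1:\,s(e)=v\}} X_{h(e)}
\]
inside $X_2$. For this I would use Lemma \ref{lematamanho} to see that $h$ maps $\{e\in E_1^1:s(e)=v\}$ into $E_2^1$, then combine the injectivity of $h|_{W_1}$ with the surjectivity hypothesis $h(W_1)=W_2$ to show that this image is precisely $\{f\in E_2^1:s(f)=h(v)\}$: injectivity handles the inclusion $\subseteq$, while given $f\in E_2^1$ with $s(f)=h(v)$, the surjectivity picks $e\in W_1$ with $h(e)=f$, and Lemma \ref{lematamanho} together with the fact that $h$ sends vertices to vertices forces $e\in E_1^1$ with $s(e)=v$. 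This finite set being in bijection with the edges emitted by $h(v)$, and (importantly) being finite and non-empty, justifies applying the CK identity in $D(X_2)\rtimes G_2$ to conclude $\sum 1_{h(e)}=1_{h(v)}$.

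Having verified the relations, the universal property yields a $K$-algebra homomorphism $L_K(E_1)\to D(X_2)\rtimes G_2$; composing with the inverse of the isomorphism from Theorem \ref{propnova} gives the desired $\varphi:D(X_1)\rtimes G_1\to D(X_2)\rtimes G_2$, satisfying the stated formulas on generators. For the inverse, Proposition \ref{lemasobrejetiva} tells us $h$ is a bijection, so $h^{-1}:G_2\to G_1$ is well-defined, and from Lemma \ref{lemacomponivel} it is a groupoid homomorphism; moreover $h^{-1}|_{W_2}$ is injective and $h^{-1}(W_2)=W_1$, so the same construction produces $\psi:D(X_2)\rtimes G_2\to D(X_1)\rtimes G_1$. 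The compositions $\psi\circ\varphi$ and $\varphi\circ\psi$ agree with the identity on the generating sets, hence are the identity, proving $\varphi$ is an isomorphism.

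Finally, grading is immediate from the construction: a typical homogeneous element $a_g\delta_g\in D_g\delta_g$ is a $K$-linear combination of monomials in the images of generators whose product lies in the $g$-fiber, and under $\varphi$ each such monomial is sent to the corresponding monomial whose product lies in the $h(g)$-fiber (using that $h$ preserves groupoid multiplication). Thus $\varphi(D_g\delta_g)\subseteq D_{h(g)}\delta_{h(g)}$, so $(\varphi,h)$ is a graded groupoid homomorphism and, together with surjectivity, a graded isomorphism as claimed.
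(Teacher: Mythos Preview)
Your proposal is correct and follows essentially the same approach as the paper: verify that the family $\{1_{h(v)}\delta_{h(v)},\,1_{h(e)}\delta_{h(e)},\,1_{h(e)^{-1}}\delta_{h(e)^{-1}}\}$ satisfies the Leavitt relations of $L_K(E_1)$, invoke the universal property to obtain $\varphi$, and construct the inverse via $h^{-1}$ using Proposition~\ref{lemasobrejetiva}. Your treatment of the Cuntz--Krieger relation (arguing the bijection $\{e:s(e)=v\}\leftrightarrow\{f:s(f)=h(v)\}$ via Lemma~\ref{lematamanho} and the hypotheses on $h$) and your explicit verification of the grading are slightly more detailed than the paper's account, but the overall strategy is identical.
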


\begin{proof}

Consider in $L_K(E_2)$ the family
\[\mathcal{F} = \{ 1_{h(e)}\delta_{h(e)} , 1_{h(e^{-1})}\delta_{h(e^{-1})}, 1_{h(v)}\delta_{h(v)} : v \in E_1^0 , e \in E_1^1\}.\]
We will show that $\mathcal{F}$ satisfy the relations defining the Leavitt path algebra $L_K(E_1)$.

\begin{itemize}
\item[(i)] Since for every vertex $v$ the action $\alpha_v$ is the identity we have that 
\begin{align*}
1_{h(s(e))}\delta_{h(s(e))} \cdot 1_{h(e)}\delta_{h(e)} &= 1_{s(h(e))}\delta_{s(h(e))} \cdot 1_{h(e)}\delta_{h(e)}\\
&= \alpha_{s(h(e))}(\alpha_{s(h(e))}^{-1}(1_{s(h(e))})1_{h(e)})\delta_{s(h(e))h(e)} \\
&= 1_{h(e)}\delta_{h(e)}.
\end{align*}

Also,

\vspace{-20pt}\begin{align*}
1_{h(e)}\delta_{h(e)} \cdot 1_{r(h(e))}\delta_{r(h(e))} &= 1_{h(e)}\delta_{h(e)} \cdot 1_{h(r(e))}\delta_{h(r(e))} \\
&= \alpha_{h(e)}(\alpha_{h(e)}^{-1}(1_{h(e)})1_{r(h(e))})\delta_{h(e)r(h(e))} \\
&= \alpha_{h(e)}(1_{h(e)^{-1}}1_{r(h(e))})\delta_{h(er(e))}\\
&= \alpha_{h(e)}(1_{h(e)^{-1}})\delta_{h(e)}\\
&= 1_{h(e)}\delta_{h(e)}
\end{align*}

\item[(ii)] For the second relation notice that
\vspace{-10pt}\begin{align*}
1_{h(r(e))}\delta_{h(r(e))} \cdot 1_{h(e^{-1})}\delta_{h(e^{-1})} &= 
1_{r(h(e))}\delta_{r(h(e))} \cdot 1_{h(e^{-1})}\delta_{h(e^{-1})}\\
&= \alpha_{r(h(e))}(\alpha_{r(h(e))}^{-1}(1_{r(h(e))})1_{h(e)^{-1}})\delta_{h(e^{-1})} \\
&= 1_{h(e^{-1})}\delta_{h(e^{-1})}
\end{align*}
and 

\vspace{-20pt}\begin{align*}
\hspace{-30pt}1_{h(e^{-1})}\delta_{h(e^{-1})}\cdot 1_{s(h(e))}\delta_{s(h(e))}&=
1_{h(e^{-1})}\delta_{h(e^{-1})}\cdot 1_{s(h(e))}\delta_{s(h(e))} \\
&= \alpha_{h(e^{-1})}(\alpha_{h(e^{-1})}^{-1}(1_{h(e^{-1})})1_{s(h(e))})\delta_{h(e^{-1})s(h(e^{-1}))} \\
&= \alpha_{h(e^{-1})}(1_{h(e^{-1})^{-1}}1_{s(h(e))})\delta_{h(e^{-1}s(e))} \\
&= \alpha_{h(e^{-1})}(1_{h(e)}1_{s(h(e))})\delta_{h(e^{-1})} \\
&= \alpha_{h(e^{-1})}(1_{h(e)})\delta_{h(e^{-1})}\\
&= 1_{h(e^{-1})}\delta_{h(e^{-1})}
\end{align*}


\item[(iii)] Next we check the third relation:

\begin{align*}
1_{h(e^{-1})}\delta_{h(e^{-1})}1_{h(g)}\delta_{h(g)} &= \alpha_{h(e^{-1})}(\alpha_{h(e^{-1})}^{-1}(1_{h(e^{-1})})1_{h(g)})\delta_{h(e^{-1})h(g)} \\
&= \alpha_{h(e^{-1})}(\alpha_{h(e)}(1_{h(e^{-1})})1_{h(g)})\delta_{h(e^{-1})h(g)} \\
&=  \alpha_{h(e^{-1})}(1_{h(e)} 1_{h(g)})\delta_{h(e^{-1})h(g)} \\
&= \delta_{h(e),h(g)} 1_{h(e^{-1})}\delta_{h(r(e))} \\
&= \delta_{e,g} 1_{h(e^{-1})}\delta_{h(r(e))}
\end{align*}

\item[(iv)] Finally we check the Cuntz-Krieger relation. Let $e \in E_1^0$ be such that $0 < \# \{e : s(e) = v \} < \infty$. Notice that 
\begin{align*}\{ e: s(e) = v\} &= \{e: h(s(e)) = h(v)\} = \{e: s(h(e)) = h(v)\}.
\end{align*}
Furthermore,
\begin{align*}
\displaystyle\sum_{s(e)= v} 1_{h(e)}\delta_{h(e)}1_{h(e^{-1})}\delta_{h(e^{-1})} &= \displaystyle\sum_{s(e) = v} 1_{h(e)}\delta_{s(h(e))} = \displaystyle\sum_{s(e) = v} 1_{h(e)}\delta_{h(v)}. \\
\end{align*}
Therefore it is enough to show that $\displaystyle\sum_{e: s(e) = v}1_{h(e)} = 1_{h(v)}$.
But this follows since $$\{ e \in E_2^1: s(e)=h(v) \}=\{h(f):f\in E^1_1 \text{ and }s(f)=v\}$$ and $1_{h(v)}= \displaystyle\sum_{f: s(f) = h(v)}1_{f}$.    
\end{itemize}

We conclude that there exists a homomorphism $\varphi : L_K(E_1) \to L_K(E_2)$ such that $\varphi(1_e\delta_e) = 1_{h(e)}\delta_{h(e)}$, $\varphi(1_{e^{-1}}\delta_{e^{-1}}) = 1_{h(e^{-1})}\delta_{h(e^{-1})}$ and $\varphi(1_v\delta_v) = 1_{h(v)}\delta_{h(v)}$. To show that this $\varphi$ is bijective we construct its inverse in an analogous way to what was done above. Namely, consider in $L_K(E_1)$ the family \[\mathcal{H} = \{ 1_{h^{-1}(e)}\delta_{h^{-1}(e)} , 1_{h^{-1}(e^{-1})}\delta_{h^{-1}(e^{-1})}, 1_{h^{-1}(v)}\delta_{h^{-1}(v)} : v \in E_2^0 , e \in E_2^1\}.\]
Since $h: G_1 \to G_2$ is a bijection such that $h^{-1}(W_2) = W_1$ e $h^{-1}|_{W_2} = W_1$, we obtain, proceeding as above, the  inverse homomorphism of $\varphi$. 

\qedhere

\end{proof}

Under condition (L) for the graph $E_1$, that is, when every closed cycle in $E_1$ has an exit, we can prove the converse of the above result. We do this in two steps.

\begin{theorem}\label{teor2}
Let $E_1$ and $E_2$ be two graphs and suppose that $E_1$ satisfies condition (L). If $(\varphi,h)$ is a graded homomorphism between the associated Leavitt path algebras, where $\varphi: D(X_1) \rtimes G_1 \to D(X_2)\rtimes G_2$ is an isomorphism and  $h: G_1 \to G_2$ is a homomorphism such that $\varphi(\{1_v\delta_v : v \in E_1^0\}) = \{ 1_w\delta_w : w \in E_2^0\}$,
then $h|_{W_1}$ is injective and $h(S_1) = S_2$ (where $S_i$ are as in definition \ref{Sset}).
\end{theorem}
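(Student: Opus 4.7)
The plan is to treat the two conclusions separately. The equality $h(S_1) = S_2$ is a general consequence of $\varphi$ being a graded bijection and does not require condition (L). The inclusion $h(S_1) \subseteq S_2$ follows from $\varphi(D_g \delta_g) \subseteq D_{h(g)} \delta_{h(g)}$ together with injectivity of $\varphi$ (so $D_g \delta_g \neq 0$ forces $D_{h(g)} \delta_{h(g)} \neq 0$). For the reverse inclusion, given $g \in S_2$ the preimage $\varphi^{-1}(D_g \delta_g)$ is non-zero; decomposing any non-zero element into homogeneous pieces and using uniqueness of the decomposition in $D(X_1)\rtimes G_1$, at least one piece has degree $g' \in h^{-1}(g)$ with $g' \in S_1$, hence $g \in h(S_1)$. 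Before tackling injectivity of $h|_{W_1}$, I would observe that the hypothesis $\varphi(\{1_v\delta_v : v \in E_1^0\}) = \{1_w\delta_w : w \in E_2^0\}$, combined with the grading, forces $h$ to restrict to a bijection $E_1^0 \to E_2^0$; in particular $h$ is injective on vertices, so $h(\alpha) = h(\beta)$ with $\alpha,\beta \in W_1$ implies $s(\alpha) = s(\beta)$ and $r(\alpha) = r(\beta)$ by Lemma \ref{lemarangesource}.

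For injectivity of $h|_{W_1}$ I argue by contradiction. Assume $\alpha \neq \beta$ in $W_1$ with $h(\alpha) = h(\beta)$, and set $g = \alpha\beta^{-1} \in G_1$. In the \emph{incomparable} case (neither $\alpha$ nor $\beta$ is an initial subpath of the other) $g$ is already in reduced form. A direct computation with the multiplication of $D(X_1)\rtimes G_1$ yields $1_\alpha\delta_\alpha \cdot 1_{\beta^{-1}}\delta_{\beta^{-1}} = 1_\alpha\delta_g$, and the square of this element vanishes because it reduces to a product $1_\beta \cdot 1_\alpha$ on $X_\beta$ and $X_\alpha \cap X_\beta = \emptyset$ by incomparability. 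On the other hand $h(g) = h(\alpha)h(\beta)^{-1} = s(h(\alpha))$ is a vertex of $E_2$, so $\varphi(1_\alpha\delta_g)$ lives in the vertex fibre $D_{s(h(\alpha))}\delta_{s(h(\alpha))}$, a reduced commutative $K$-algebra in which only zero squares to zero. This forces $\varphi(1_\alpha\delta_g) = 0$, contradicting injectivity of $\varphi$ since $1_\alpha \neq 0$ in $D_g$. In the \emph{comparable} case, without loss of generality $\beta = \alpha\gamma$ with $\gamma = \gamma_1\cdots\gamma_k \in W_1$ non-trivial; the identity $h(\alpha) = h(\alpha)h(\gamma)$ rearranges to $h(\gamma) = h(r(\alpha))$, so $\gamma$ is a closed path at $u = r(\alpha)$ whose image under $h$ is a vertex. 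Using condition (L) I produce (see the next paragraph) an index $i \in \{0,\ldots,k-1\}$ and an edge $e$ with $s(e) = v_i$ (where $v_i = r(\gamma_i)$, $v_0 = u$) and $e \neq \gamma_{i+1}$. Then $\alpha' := e$ and $\beta' := \gamma_{i+1}\cdots\gamma_k\gamma_1\cdots\gamma_i\, e$ are distinct, incomparable paths in $W_1$ starting at $v_i$; since $h(\gamma) = h(u)$ telescopes to $h(\gamma_{i+1}\cdots\gamma_k) = h(\gamma_1\cdots\gamma_i)^{-1}$, one computes $h(\beta') = h(e) = h(\alpha')$, and the incomparable case applied to $\alpha',\beta'$ yields a contradiction.

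The main obstacle is establishing the existence of the exit edge invoked above. I would argue contrapositively: if for every $i \in \{0,\ldots,k-1\}$ the only outgoing edge at $v_i$ is $\gamma_{i+1}$, then in particular $v_i = v_j$ forces $\gamma_{i+1} = \gamma_{j+1}$, so the walk determined by "take the unique outgoing edge" starting at $u$ is precisely $\gamma$ and traverses a fixed simple cycle $c$ repeatedly. Condition (L) applied to $c$ produces an exit, which is an edge at some vertex of $\gamma$ different from the corresponding $\gamma_{i+1}$, contradicting the standing assumption. With this auxiliary fact in hand, the remaining computations are routine applications of the multiplication formula of the partial skew groupoid ring developed in the previous section.
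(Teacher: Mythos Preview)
Your argument is correct, and for $h(S_1)=S_2$ and the injectivity of $h$ on vertices it coincides with the paper's proof. For the injectivity of $h|_{W_1}$, however, you take a genuinely different route. The paper proves a single auxiliary claim (its Step~2): for any $\alpha\in S_1$, if $h(\alpha)\in E_2^0$ then $\alpha\in E_1^0$. The point is that $\varphi(1_\alpha\delta_\alpha)$ and $\varphi(1_{\alpha^{-1}}\delta_{\alpha^{-1}})$ lie in the same vertex fibre $D_{h(\alpha)}\delta_{h(\alpha)}$, which is \emph{commutative}; pulling commutativity back through the isomorphism yields $1_\alpha\delta_{s(\alpha)}=1_{\alpha^{-1}}\delta_{r(\alpha)}$, hence $1_\alpha=1_{r(\alpha)}$, and condition~(L) then rules out nontrivial closed $\alpha$. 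Injectivity on $W_1$ follows in one line: $h(\alpha)=h(\beta)$ gives $h(\alpha\beta^{-1})\in E_2^0$, so $\alpha\beta^{-1}\in E_1^0$.

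Your proof instead splits into the incomparable and comparable cases. In the incomparable case you exploit that the vertex fibre is \emph{reduced} (rather than commutative): the element $1_\alpha\delta_{\alpha\beta^{-1}}$ is a nonzero square-zero element whose image would have to be square-zero in a reduced algebra. In the comparable case you rotate the closed path $\gamma$ and attach an exit edge to manufacture an incomparable pair. Both proofs ultimately rest on the fact that $D_w$ is a subalgebra of a function algebra $\mathcal{F}(X_w)$ over a field, but they use different facets of this (commutativity versus absence of nilpotents). The paper's Step~2 is slightly stronger in that it covers all $\alpha\in S_1$, and it avoids the case split; your approach has the pleasant feature that the incomparable case does not invoke condition~(L) at all, isolating exactly where that hypothesis is needed.
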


\begin{proof}
We show first that $h(S_1) = S_2$.

Given $g \in S_1$, we have that $1_g\delta_g \neq 0$ in $D(X_1) \rtimes G_1$. Therefore, since $\varphi$ is injective, $0 \neq \varphi(1_g\delta_g) \in D_{h(g)}\delta_{h(g)}$ and hence $h(g) \in S_2$.

For the other inclusion, let $c \in S_2$. Then $1_c\delta_c \neq 0$ and, since $\varphi$ is an isomorphism, there exists $x = \sum \alpha_g \delta_g$ such that $1_c\delta_c = \varphi(x) = \sum \varphi(\alpha_g\delta_g)$. Since $\varphi(\alpha_g\delta_g) \in D_{h(g)}\delta_{h(g)}$, for all $g$, we have that 
there exists $g$ such that $h(g) = c$ and $\varphi(\alpha_g\delta_g) \neq 0$ (notice that $g\in S_1$, since $\alpha_g\delta_g \neq 0$). 

Next we prove that $h|_{W_1}$ is injective. We divide this in three steps. Recall that, by Lemma \ref{lemarangesource}, $h(E_1^0) \subseteq E_2^0$. 

\textbf{Step 1: } $h$ is injective in the vertices, that is, $h|_{E_0^1}$ is injective.

Let $v, w \in E_1^0$. Notice that
\[\varphi(1_v\delta_v 1_w\delta_w) = \varphi(1_v\delta_v)\varphi(1_w\delta_w) = 1_{h(v)}\delta_{h(v)}\cdot 1_{h(w)}\delta_{h(w)} = \delta_{h(v), h(w)}1_{h(v)}\delta_{h(v)}.\]
On the other hand, $\varphi(1_v\delta_v 1_w\delta_w) = \varphi(\delta_{v,w} 1_v\delta_v) = \delta_{v,w}1_{h(v)}\delta_{h(v)}$. Therefore $h(v) = h(w)$ if, and only if, $v=w$.

\textbf{Step 2: } Let $\alpha \in S_1$. If $h(\alpha) \in E_2^0$ then $\alpha \in E_1^0$.

Suppose that $\alpha \in S_1$ is such that $h(\alpha) \in E_2^0$. Then we have that 
\[h(s(\alpha)) = s(h(\alpha)) = r(h(\alpha)) = h(r(\alpha)).\]
Therefore, since $h$ is injective on the vertices (Step 1), we have that $s(\alpha) = r(\alpha)$ and $\alpha$ must be a closed cycle. 

Next notice that, since the partial action on the ideals associated to vertices is the identity, we have that
\[\varphi(1_{\alpha}\delta_{\alpha}1_{\alpha^{-1}}\delta_{\alpha^{-1}}) =\varphi(1_{\alpha}\delta_{\alpha})\varphi(1_{\alpha{-1}}\delta_{\alpha^{-1}}) = a\delta_{h(\alpha)}b\delta_{h(\alpha^{-1})} = ab\delta_{h(\alpha)},\]
where $a, b \in D_{h(\alpha)}= D_{h(\alpha^{-1})}$ are such that $\varphi(1_{\alpha}\delta_{\alpha}) = a\delta_{h(\alpha)}$ and $\varphi(1_{\alpha^{-1}}\delta_{\alpha^{-1}}) = b\delta_{h(\alpha^{-1})} = b\delta_{h(\alpha)}$.
Analogously, $\varphi(1_{\alpha^{-1}}\delta_{\alpha^{-1}} 1_{\alpha}\delta_{\alpha}) = ba\delta_{h(\alpha)} = ab\delta_{h(\alpha)}$. Since $\varphi$ is an isomorphism we have that $1_{\alpha}\delta_{\alpha} 1_{\alpha^{-1}}\delta_{\alpha^{-1}} = 1_{\alpha^{-1}}\delta_{\alpha^{-1}}1_{\alpha}\delta_{\alpha}$, that is,
\[1_{\alpha}\delta_{s(\alpha)} = 1_{\alpha^{-1}}\delta_{r(\alpha)}.\]
Hence, since $s(\alpha)=r(\alpha)$, the above equality implies that $1_{\alpha} = 1_{\alpha^{-1}} = 1_{r(\alpha)}$, what can only happen if $\alpha\in E_1^0$ (as we show below).

Suppose that $\alpha \in W_1$ or $\alpha \in W_1^*$. Without loss of generality we can assume that $\alpha \in W_1$. Since $E_1$ satisfies condition $(L)$, the cycle $\alpha$ has an exit $e \in E_1^1$. So we can choose a subpath $\alpha'$ of $\alpha$ such that $\alpha = \alpha' \alpha''$ and $\alpha'e$ is composable. Then $1_{\alpha}(\alpha'e) = 0$ and $1_{r(\alpha)}(\alpha'e) = 1$ and hence $\alpha \notin W_1\cup W_1^*$.

Finally, suppose that $\alpha = \beta\gamma^{-1}$, where $\beta, \gamma \in W_1 \setminus E_1^0$ and $r(\beta) = r(\gamma)$, in reduced form. Then $1_{\beta} = 1_{\alpha} = 1_{\alpha^{-1}} = 1_{\gamma},$ what is impossible. 

We conclude that $\alpha \in E_1^0$ as desired.

\textbf{Step 3: } $h$ is injective in $W_1$.

Let $\alpha$ and $\beta \in W_1$ be such that $h(\alpha) = h(\beta)$. Then
\[ h(r(\alpha)) = r(h(\alpha)) = r(h(\beta)) = h(r(\beta)),\]
and by Step 1 we have that $r(\alpha) = r(\beta)$. Therefore $\alpha\beta^{-1}$  is composable and \[h(\alpha\beta^{-1}) = h(\alpha)h(\beta)^{-1} = h(\alpha)h(\alpha)^{-1} = s(h(\alpha)) \in E_2^0.\]
By Step 2 we have that $\alpha\beta^{-1} \in E_1^0$ and hence $\alpha = \beta$ as desired.
\end{proof}

\begin{corolario}\label{ultimaprop}
If in Theorem \ref{teor2} we further assume that $\varphi(\{1_e\delta_e : e \in E_1^1\}) = \{1_f \delta_f : f \in E_2^1\}$, then $h(W_1) = W_2$.
\end{corolario}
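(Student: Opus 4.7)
The plan is to first upgrade the set-level hypothesis on edge generators to a bijection $h|_{E_1^1}\colon E_1^1 \to E_2^1$, and then lift and push words edge-by-edge in each direction. For the first step, I would exploit the grading of $\varphi$: for each $e\in E_1^1$, the element $1_e\delta_e$ lies in the fiber $D_e\delta_e$, so graded compatibility forces $\varphi(1_e\delta_e)\in D_{h(e)}\delta_{h(e)}$. The extra hypothesis writes $\varphi(1_e\delta_e)=1_f\delta_f$ for some $f\in E_2^1$, and since $1_f\delta_f$ is a nonzero element of $D_f\delta_f$ while distinct fibers of $D(X_2)\rtimes G_2$ meet trivially, $h(e)=f$. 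Because $\varphi$ is injective, the assignment $e\mapsto f$ sends $\{1_e\delta_e:e\in E_1^1\}$ bijectively onto $\{1_f\delta_f:f\in E_2^1\}$, so $h|_{E_1^1}$ is a bijection onto $E_2^1$. The vertex hypothesis from Theorem \ref{teor2}, handled the same way, gives $h(E_1^0)=E_2^0$.

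With this identification in hand, the inclusion $h(W_1)\subseteq W_2$ is immediate. Any $\alpha=\alpha_1\cdots\alpha_n\in W_1$ with $\alpha_i\in E_1^1$ gives $h(\alpha)=h(\alpha_1)\cdots h(\alpha_n)$, where composability in $G_2$ follows from Lemma \ref{lemarangesource} applied to the matchings $r(\alpha_i)=s(\alpha_{i+1})$, and every factor $h(\alpha_i)$ lies in $E_2^1$.

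For the reverse inclusion $W_2\subseteq h(W_1)$, I would lift edge-by-edge. Given $\beta=\beta_1\cdots\beta_m\in W_2$ with $\beta_i\in E_2^1$, the surjectivity half of the bijection just established supplies $\alpha_i\in E_1^1$ with $h(\alpha_i)=\beta_i$. To check that these concatenate in $G_1$, apply $h$ to the desired equation $r(\alpha_i)=s(\alpha_{i+1})$: by Lemma \ref{lemarangesource} it becomes $r(\beta_i)=s(\beta_{i+1})$, which holds because $\beta\in W_2$. Injectivity of $h$ on vertices, already supplied by Step~1 of the proof of Theorem \ref{teor2}, then lifts the equality back to $G_1$, so $\alpha_1\cdots\alpha_n\in W_1$ and is mapped to $\beta$ under $h$.

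I do not foresee a genuine obstacle: Theorem \ref{teor2} has already done the hard analytic work (in particular injectivity of $h$ on vertices and the identification $h(S_1)=S_2$), and the added hypothesis acts as the small additional ingredient needed to also transport edges bijectively, after which words are handled by a straightforward induction on length.
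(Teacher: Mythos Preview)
Your proposal is correct and follows essentially the same route as the paper: both arguments first read off $h(e)\in E_2^1$ from the grading and the added hypothesis, then push words forward edge-by-edge for $h(W_1)\subseteq W_2$, and lift edges for the reverse inclusion. The only cosmetic difference is that for composability of the lifted word you invoke injectivity of $h$ on vertices (Step~1 of Theorem~\ref{teor2}) together with Lemma~\ref{lemarangesource}, whereas the paper cites Lemma~\ref{lemacomponivel} directly; these amount to the same thing.
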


\begin{proof}
Notice that with the additional hypothesis, since $(\varphi,h)$ is a graded homomorphism, then $\varphi(1_e\delta_e) = 1_{h(e)}\delta_{h(e)}$, for all $e \in E_1^1$.

Let $e_1 \ldots e_n \in W_1$. Then $h(e_1 \ldots e_n) = h(e_1) \ldots h(e_n)$ and since $h(e_i) \in E_2^1$, for all $i \in \{1, \ldots , n\}$, we have that $h(e_1 \ldots e_n) \in W_2$ and hence $h(W_1) \subseteq W_2$.

For the other inclusion, let $f \in E_2^1$. Since $\varphi(\{1_e\delta_e : e \in E_1^1\}) = \{1_f \delta_f : f \in E_2^1\}$, there exists $e \in E_1^1$ such that $1_f\delta_f = \varphi(1_e\delta_e)= 1_{h(e)}\delta_{h(e)}$ and hence $h(e) = f$. So, given  $f_1 \ldots f_n \in W_2$ there exist $e_1, \ldots, e_n \in W_1$ such that $h(e_i) = f_i$ and hence, by Lemma \ref{lemacomponivel}
\[ f_1 \ldots f_n = h(e_1)\ldots h(e_n) = h(e_1 \ldots e_n).\] 
Therefore $W_2 \subseteq h(W_1)$ as desired.

\qedhere

\end{proof}

We end our work with an example that shows that the additional hypothesis included in Corollary \ref{ultimaprop} is essential. 

\begin{example}
Let $E_1 = \{E_1^0, E_1^1, r_1, s_1\}$, where $E_1^0 = \{ v_1, v_2, v_3\}$, $E_1^1 = \{e_1, e_2\}$, $r_1(e_1) = v_2 = s_1(e_2)$, $s_1(e_1) = v_1$ and $r_1(e_2) = v_3$. We picture this graph below.
$$ \xymatrix{ v_1 \ar[r]^{e_1}  & v_2 \ar[r]^{e_2} & v_3 } $$
Let $E_2 = \{E_2^0, E_2^1, r_2, s_2\}$, where $E_2^0 = \{ w_1, w_2, w_3\}$, $E_2^1 = \{f_1, f_2\}$, $r_2(f_1) = w_3 = r_2(f_2)$, $s_2(f_1) = w_1$ and $s_2(f_2) = w_2$. We picture this graph below
$$ \xymatrix{ & w_3  &\\ w_1 \ar[ur]^{f_1} &  & w_2 \ar[ul]^{f_2} } $$

Notice that $G_1 = \{v_1, v_2, v_3, e_1, e_2, e_1^{-1}, e_2^{-1}, e_1e_2, e_2^{-1}e_1^{-1}\}$.

Define $h: G_1 \to G_2$ so that $h(e_1) = f_1^{-1}$ and $h(e_2) = f_1f_2^{-1}$, 
Notice that
\begin{align*}
h(v_1) &= h(e_1e_1^{-1}) = h(e_1)h(e_1)^{-1} = f_1^{-1}f_1 = r(f_1) = w_3 \\
h(v_2) &= h(e_1^{-1}e_1) = h(e_1)^{-1}h(e_1)= f_1f_1^{-1} = s(f_1) = w_1 \\
h(v_3) &= h(e_2^{-1}e_2) = h(e_2)^{-1}h(e_2) = (f_1f_2^{-1})^{-1}(f_1f_2^{-1}) \\ &= f_2 r(f_1) f_2^{-1} = s(f_2) = w_2.
\end{align*}

Using the universal property of Leavitt path algebras one can construct a graded isomorphism $(\varphi,h)$, where $\varphi: L_K(E_1) \to L_K(E_2)$ is such that $\varphi(1_{e_1}\delta_{e_1}) = 1_{f_1^{-1}}\delta_{f_1^{-1}}$, $\varphi({1_{e_2}\delta_{e_2}}) = 1_{f_1f_2^{-1}}\delta_{f_1f_2^{-1}}$, $\varphi(1_{v_1}\delta_{v_1}) = 1_{w_3}\delta_{w_3}$, $\varphi(1_{v_2}\delta_{v_2}) = 1_{w_1}\delta_{w_1}$, $\varphi(1_{v_3}\delta_{v_3}) = 1_{w_2}\delta_{w_2}$.

Notice that $\varphi(\{1_{v_1}\delta_{v_1}, 1_{v_2}\delta_{v_2}, 1_{v_3}\delta_{v_3}\}) = \{1_{w_1}\delta_{w_1}, 1_{w_2}\delta_{w_2}, 1_{w_3}\delta_{w_3}\}$, but $\varphi(\{1_{e_1}\delta_{e_1}, 1_{e_2}\delta_{e_2}\}) \neq \{1_{f_1}\delta_{f_1}, 1_{f_2}\delta_{f_2}\}$, that is, the additional hypothesis of Corollary \ref{ultimaprop} is not satisfied. Furthermore, it is clear that $h(W_1) \neq W_2$. 

\qed

\end{example}

\addcontentsline{toc}{section}{References}

\vspace{1.5pc}

Daniel Gonçalves, Departamento de Matemática, Universidade Federal de Santa Catarina, Florianópolis, 88040-900, Brasil

Email: daemig@gmail.com

\vspace{0.5pc}
Gabriela Yoneda, Departamento de Matemática, Universidade Federal de Santa Catarina, Florianópolis, 88040-900, Brasil

Email: yonedagabriela@gmail.com
\vspace{0.5pc}

\end{document}